\newtheorem{theorem}{Theorem}[section]
\newtheorem{corollary}[theorem]{Corollary}
\newtheorem{lemma}[theorem]{Lemma}
\newtheorem{proposition}[theorem]{Proposition}
\theoremstyle{definition}
\newtheorem{definition}[theorem]{Definition}
\newtheorem{pf}{Proof}
\newtheorem{remark}{Remark}
\newtheorem*{acknow}{Acknowledgments}
\numberwithin{equation}{section}
\begin{document}
\title[Lagrangian Bonnet pairs in complex space forms] {Lagrangian Bonnet pairs in
complex space forms}
\author{Huixia He}
\address{School of Mathematics and Systems Science of Beijing University of Aeronautics and Astronautics ,
LIMB of the Ministry of Education, Beijing 100083, P.R.~China}
\email{hehx@buaa.edu.cn}
\author{Hui Ma}
\address{Department of Mathematical Sciences,
Tsinghua University, Beijing, 100084, P.R.~China}
\email{hma@math.tsinghua.edu.cn}
\author{Erxiao Wang}
\address{Department of Mathematics, Hong Kong Univ. of Sci. and
Tech., Clear Water Bay, Kowloon, Hong Kong}
\email{maexwang@ust.hk}
\thanks{*Corresponding author: Huixia He}
\keywords{Lagrangian submanifolds, complex space forms, Bonnet pairs}
 \subjclass[2010]{Primary
53C40, Secondary 53C42, 53D12}

\begin{abstract}
 In this paper we first give a Bonnet theorem for conformal Lagrangian surfaces in complex space forms, then we show that any compact Lagrangian
 surface in the complex space form admits at most one other global isometric Lagrangian surface with the same mean curvature form, unless
the Maslov form is conformal. These two Lagrangian surfaces are then
called Lagrangian Bonnet pairs. We also studied the question about
Lagrangian Bonnet surfaces in $\tilde{M}^2(4c)$, and obtain  some
interesting results.
\end{abstract}

\maketitle

\section{Introduction}
 In surface theory of $3$-dimensional Euclidean space $\mathbb R^3$,
 the series work by Bonnet,  Cartan and Chern (\cite{Bonnet, Cartan, Chern}) show
 that mean curvature and metric is sufficient to determine an  oriented surface generically,
 except three cases:  \textit{constant mean curvature surfaces},
\textit{Bonnet surfaces}, which admits a nontrivial isometric
deformation preserving the mean curvature function, and
\textit{Bonnet pairs}, which are exactly two non-congruent isometric
surfaces with the same mean curvature function. CMC surfaces have
been investigated intensively by various methods; Bonnet surfaces
have been treated using techniques of integrable systems theory and
extended to $S^3$ and $H^3$, and recently have been generalized to
the homogeneous $3$-manifold with a $4$-dimensional isometry group
have been investigated intensively by various methods
(\cite{Cartan}, \cite{Chern}, \cite{Chenli}, \cite{ChenPeng},
\cite{GMM}). But much less is known about Bonnet pairs. Bobenko
\cite{Bobenko} takes some steps towards attacking this problem by
treating Bonnet pairs as integrable systems. However,  it is still
an open question whether compact Bonnet pairs exist in $\mathbb
R^3$. Since the theory of Bonnet pairs in $\mathbb R^3$ is closely
related to isothermic surfaces in $S^3$ ,  it was generalized to
$S^3$ in \cite{KPP}. Lawson and Tribuzy (\cite{LT}) showed that any
compact oriented surface in the $3$-dimensional real form $M^3(c)$
with nonconstant mean curvature, admits at most two surfaces with
the given metric and mean curvature. Springborn (\cite{Springborn})
showed that helicoidal immersed tori are compact Bonnet pairs in
$3$-dimensional sphere.

By the investigation of Lagrangian surfaces in the complex
projective plane $\mathbb{C}P^2$, we find it is very interesting to
consider the analog problem for conformal Lagrangian surfaces in the
complex space form $\tilde{M}^2(4c)$, where $c=1$, $0$ or $-1$.
Explicitly, which data are sufficient to determine a conformal
Lagrangian immersion of a Riemannian surface in $\tilde{M}^2(4c)$ up
to rigidity motions. The first two authors introduced a new concept
of \emph{Lagrangian Bonnet pairs} in $\mathbb CP^2$ in a similar
spirit and derived a Lawson-Tribuzy type theorem (\cite{hm}). In
this paper, we first generalize this result to the surfaces in the
complex Euclidean plane $\mathbb{C}^2$ and the complex hyperbolic
plane $\mathbb CH^2$. On the other hand, we also study the
\textit{Lagrangian Bonnet surfaces} in $\tilde{M}^2(4c)$, i.e. to
Lagrangian surfaces in complex space forms possessing one-parameter
families of isometries preserving the mean curvature form, and
obtain several interesting results.

Remark that Lagrangian submanifolds in complex space forms with
conformal Maslov form were deeply studied in \cite{CMU},
\cite{CU93}, \cite{CU}, \cite{CU97}, \cite{RU} and etc.

\section{Lagrangian surfaces in complex space forms}

\subsection{Lagrangian surface in $\mathbb{C}^2$}
Identify $\mathbb{C}^2$ with $\mathbb{R}^4$ equipped with its standard inner product $\langle\, ,\,\rangle$,
the standard complex structure $J$ and K\"{a}hler form $\omega$.
Let $f:\Sigma \rightarrow \mathbb{C}^2$ be a Lagrangian immersion of an
oriented surface. The induced metric on $\Sigma$
generates a complex structure with respect to which the metric is
$g=2e^u dzd{\bar z}$, where $z=x+iy$ is a local complex coordinate
on $\Sigma$ and $u$ is a real-valued function defined on $\Sigma$ locally.
Let $f_z$ and $f_{\bar z}$ denote the complexified tangent vector, where
$$\frac{\partial}{\partial z}=\frac{1}{2}(\frac{\partial}{\partial x}-i\frac{\partial}{\partial y}),
\quad \quad \frac{\partial}{\partial {\bar z}}=\frac{1}{2}(\frac{\partial}{\partial x}+i\frac{\partial }{\partial y}).$$
Complexify the product $\langle \, , \, \rangle$ and the complex structure on $\mathbb{R}^4$ to $\mathbb{C}^4$, still denoted by
$\langle\, , \rangle$ and $J$.
Then the metric $g$ is conformal that gives
\begin{equation}\label{conformal}
\langle f_{z},   f_ z \rangle=\langle f_{\bar z}, f_{\bar z}\rangle =0,
\quad \quad \langle f_z, f_{\bar z}\rangle=e^u.
\end{equation}
Moreover, a conformal immersion $f: \Sigma\rightarrow \mathbb{C}^2$ is Lagrangian if and only if
\begin{equation}\label{Lagrangian}
\langle f_z, Jf_{\bar z} \rangle=0.
\end{equation}

Thus the vectors $f_z, f_{\bar z}, Jf_z$ as well as $Jf_{\bar{z}}$
define an
orthogonal moving frame on the surface.
Denote $\sigma=(f_z, f_{\bar{z}}, Jf_z, Jf_{\bar{z}})$, which due to
\eqref{conformal} and \eqref{Lagrangian} satisfies the following
equations\begin{equation}\label{eq:frame}\sigma_{z}=\sigma {\mathcal
U}, \quad \sigma_{\bar z}=\sigma{\mathcal V} ,
\end{equation}
\begin{equation}\label{eq:UV}
{\mathcal U}=\left(\begin{array}{cccc}
            u_z&0& -\phi & -\bar{\phi} \\
            0& 0  & -e^{-u}\psi& -\phi\\
            \phi & \bar{\phi} &u_z &0\\
            e^{-u}\psi&\phi&0&0
            \end{array}
            \right),
 {\mathcal V}=\left(\begin{array}{cccc}
     0&0 & -\bar{\phi} & -e^{-u}\bar{\psi}\\
    0 & u_{\bar{z}} & - \phi&-\bar{\phi}\\
   \bar{\phi} & e^{-u}\bar{\psi} & 0&0\\
     \phi&\bar{\phi}&0&u_{\bar{z}}
    \end{array}\right),
\end{equation}
where
\begin{equation*}
 \phi=e^{-u}\langle f_{z\bar z},  Jf_{z} \rangle,
\quad  \psi=\langle f_{zz}, Jf_z\rangle.
\end{equation*}

The one-form $\Phi=\phi dz$ and the cubic differential
$\Psi=\psi dz^3$ are globally defined on $\Sigma$ which are independent
of the choice of the complex coordinates and $U(2)$-invariant. We call $\Phi$ and $\Psi$ mean
curvature form and Hopf differential of $f$, respectively.

By straightforward calculations we get the following integrability conditions:
\begin{eqnarray*}
\phi_{\bar z}-{\bar\phi}_{z}&=&0,\label{eq:integ1_C2}\\
u_{z\bar z}+|\phi|^2-e^{-2u}|\psi|^2 &=&0,\label{eq:integ2_C2}\\
 e^{-2u}\psi_{\bar z}&=&(e^{-u}\phi)_z.\label{eq:integ3_C2}
\end{eqnarray*}

\subsection{Lagrangian surface in $\mathbb{C}H^2$}

Let $(\mathbb CH^2, h, J, \omega)$ be the $2$-dimensional complex hyperbolic space endowed with the Fubini-Study metric $h$ of constant
holomorphic sectional curvature $-4$, where $J$ denotes the standard complex structure of $\mathbb CH^2$ and $\omega$ is the K\"{a}hler form
given by $\omega(X,Y)=h(JX,Y)$ for any tangent vectors $X$ and $Y$.

Let
$$\mathbb{H}^5_1=\{Z\in \mathbb C^3 \, \vert \, (Z,Z)=-1\},$$
where $( \, , \, )$ denotes the Hermitian inner product defined by
$$(Z, W)=-z_0\bar{w}_0+z_1\bar{w}_1+z_2\bar{w}_2,$$
for any $Z=(z_0, z_1,z_2), W=(w_0,w_1,w_2)\in \mathbb C^3$.
Thus $\langle Z, W\rangle=\mathrm{Re}(z, w)$ induces a metric on $\mathbb{H}^5_1$ of constant curvature $-1$.
Hence the Fubini-Study metric $h$ on $\mathbb CH^2$  is obtained from the fact that the Hopf fibration  $\pi: \mathbb{H}^5_1(-1)\to \mathbb CH^2$ is a Riemannian submersion.

Now let $f: \Sigma\rightarrow \mathbb{C}H^2$ be a Lagrangian immersion of an oriented surface.
Still denote the induced metric $g=f^* h=2e^u dzd\bar{z}$ on $\Sigma$ by $\langle \, , \, \rangle$ without causing ambiguity.
And there always exists a horizontal local lift $F: {\rm U}\rightarrow \mathbb{H}_1^5$ such that
\begin{equation}\label{horizontal}
\langle F_z , JF \rangle=\langle F_{\bar z}, JF \rangle =0,
\end{equation}
where $\rm U$ is an open set of $\Sigma$. In fact, generally, it
follows from $\Sigma$ is Lagrangian that $\langle dF, JF \rangle$ is
a closed one-form for any local lift $F$. So there exists a real function $\eta\in
C^\infty ({\rm U})$ locally such that $d\eta= \langle dF,   JF\rangle$. Then
$\tilde{F}=e^{i\eta} F$ is a horizontal local lift for $f$ to $\mathbb{H}^5_1$.

The metric $g$ is conformal that gives
\begin{eqnarray}
& &\langle F_{z} ,  F_{\bar z}\rangle  =e^u,\label{conformal1}\\
& &\langle F_z,  F_z \rangle = \langle F_{\bar z} ,
 F_{\bar z} \rangle=0 \label{conformal2}.
\end{eqnarray}
$F$ is a Lagrangian immersion means that
$$\langle F_z, JF_{\bar{z}}\rangle=\langle F_{\bar{z}}, JF_z\rangle=0.$$
Thus the vectors $F, JF, F_z, JF_z,  F_{\bar z}$ as well as $JF_{\bar z}$
define a  moving frame on the surface.  One obtains the following
frame equations:
\begin{equation}\label{eq:frame_CH2}
\begin{array}{l}
F_{zz}=u_zF_z+\phi JF_z+e^{-u}\psi
JF_{\bar{z}},\\
F_{z\bar{z}}=\bar{\phi} JF_z+\phi
JF_{\bar{z}}+e^uF,\\
F_{\bar{z}\bar{z}}=u_{\bar{z}}F_{\bar{z}}+e^{-u}\bar{\psi}
JF_z+\bar{\phi}JF_{\bar{z}},\end{array}
\end{equation}
where
\begin{equation}\label{eq:phipsi}
\quad \phi=e^{-u}\langle  F_{z\bar z},  JF_z  \rangle, \quad
\psi=\langle F_{zz},  JF_z \rangle.
\end{equation}
It is easy to see that the one-form $\Phi=\phi dz$ and the cubic differential $\Psi=\psi dz^3$ are globally defined on $\Sigma$
and  $U(1, 2)$-invariant.
We also call $\Phi$ and $\Psi$ mean curvature form and Hopf differential of $f$, respectively.

The compatibility condition of equations \eqref{eq:frame_CH2} has the following form:
\begin{eqnarray*}
\phi_{\bar z}-{\bar\phi}_z&=&0,\label{eq:integ1_CH2}\\
u_{z\bar z}+|\phi|^2-e^u-e^{-2u}|\psi|^2&=&0,\label{eq:integ2_CH2}\\
 e^{-2u}\psi_{\bar z}&=&(e^{-u}\phi)_z. \label{eq:integ3_CH2}
\end{eqnarray*}

Recall that for a conformal Lagrangian immersed surface $f:\Sigma\rightarrow \mathbb{C}P^2(4)$,
let $F$ be the horizontal local lift to $S^5(1)$ (\cite{hm}).
Still denote  by $\langle\,,\,\rangle$ the standard inner product on $\mathbb{C}^3$ and $J$ the standard complex structure.
Define the mean curvature form $\Phi=\phi dz$ and the cubic Hopf cubic form $\Psi=\psi dz^3$ by \eqref{eq:phipsi}
in terms of the corresponding $F$ and $\langle\, ,\,\rangle$.
Then we have
\begin{theorem} 
Let $f:\Sigma\rightarrow \tilde{M}^2(4c)$ be a conformal Lagrangian surface in $\tilde{M}^2(4c)$ with the metric
$g=2e^u dz d\bar{z}$ for $c=0$, $1$ or $-1$.
Then the metric $g$, mean curvature form $\Phi=\phi dz$ and cubic Hopf differential $\Psi=\psi dz^3$ satisfy the following equations:
\begin{eqnarray}
\phi_{\bar z}-{\bar\phi}_z&=&0,\label{eq:integ1_csf}\\
u_{z\bar z}+|\phi|^2+c e^u-e^{-2u}|\psi|^2&=&0,\label{eq:integ2_csf}\\
e^{-2u}\psi_{\bar z}&=&(e^{-u}\phi)_z. \label{eq:integ3_csf}
\end{eqnarray}

Conversely, given a metric $g=2e^udzd\bar{z}$, a one-form $\Phi=\phi dz$ and a cubic form $\Psi=\psi dz^3$ on $\Sigma$
satisfying \eqref{eq:integ1_csf}, \eqref{eq:integ2_csf} and \eqref{eq:integ3_csf},
there exists a Lagrangian immersion $f:\tilde{\Sigma}\rightarrow \tilde{M}^2(4c)$ from the universal cover $\tilde{\Sigma}$ of $\Sigma$
to  $\tilde{M}^2(4c)$ with the metric $g$ and mean curvature form $\Phi$ and cubic Hopf differential $\Psi$.
The immersion $f$ is unique up to isometries in  $\tilde{M}^2(4c)$.
\end{theorem}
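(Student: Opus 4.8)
The plan is to treat the two assertions separately. The necessity of \eqref{eq:integ1_csf}--\eqref{eq:integ3_csf} is already essentially contained in the preceding subsections: differentiating the frame equations \eqref{eq:frame}--\eqref{eq:UV} for $c=0$ and \eqref{eq:frame_CH2} for $c=-1$ yields the stated compatibility relations, and the $\mathbb{C}P^2$ case is recorded in \cite{hm}. These three sets of conditions are uniformized by the single parameter $c$; the only geometric difference is the coefficient of $F$ in the mixed second derivative $F_{z\bar z}$, which produces precisely the term $+ce^u$ in \eqref{eq:integ2_csf}. So for this direction I would simply collect the three computations. The substance of the theorem is the converse, which I would establish by the moving-frame (Frobenius) integration method.

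For the converse, I would first assemble the appropriate moving frame into a single group-valued map and rewrite the frame equations as a first-order linear system. For $c\neq 0$ take $\Phi=(F,JF,F_z,JF_z,F_{\bar z},JF_{\bar z})$, valued in $U(3)$ (resp.\ $U(1,2)$) for $c=1$ (resp.\ $c=-1$); for $c=0$ take $\sigma=(f_z,f_{\bar z},Jf_z,Jf_{\bar z})$ valued in the linear frame group, with the base point $f$ to be recovered afterward by integration, so that the full datum lives in $U(2)\ltimes\mathbb{C}^2$. In every case the equations take the form $\Phi_z=\Phi\,\mathcal U$, $\Phi_{\bar z}=\Phi\,\mathcal V$, where the connection coefficients $\mathcal U,\mathcal V$ are the matrices built from $u,\phi,\psi$ and $c$ exactly as in \eqref{eq:UV} and \eqref{eq:frame_CH2} (now with the $ce^u$ entry inserted).

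The key step, which I expect to be the main obstacle, is to verify that the Maurer--Cartan (zero-curvature) equation $\mathcal U_{\bar z}-\mathcal V_z=[\mathcal U,\mathcal V]$ is equivalent to the three scalar equations \eqref{eq:integ1_csf}--\eqref{eq:integ3_csf}. This is the central matrix computation: expanding the identity entry by entry should reproduce \eqref{eq:integ1_csf} (closedness of $\Phi$), \eqref{eq:integ2_csf} (the Gauss-type equation, where the $c$-dependence enters through the $ce^u$ term), and \eqref{eq:integ3_csf} (the Codazzi-type relation between $\psi$ and $\phi$), the remaining entries being redundant. The bookkeeping must be carried out uniformly across the three cases, the sign of $c$ being the only place the ambient geometry intervenes. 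Granting this, since $\tilde\Sigma$ is simply connected the flat connection integrates: fixing an initial frame there is a unique solution $\Phi:\tilde\Sigma\to G_c$, and any two solutions differ by left multiplication by a constant element of $G_c$.

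Finally I would reconstruct and verify the immersion. For $c\neq0$ the map $f=\pi\circ F$ is read directly off the frame, $\pi$ being the Hopf projection; for $c=0$ one integrates $df=f_z\,dz+f_{\bar z}\,d\bar z$, whose closedness is guaranteed by the compatibility of the frame equations. That $f$ is a conformal Lagrangian immersion with the prescribed $g,\Phi,\Psi$ then follows because these invariants are exactly the quantities encoded in $\mathcal U,\mathcal V$, which hold by construction, recovering \eqref{conformal}--\eqref{Lagrangian} (resp.\ \eqref{horizontal}--\eqref{conformal2}) and \eqref{eq:phipsi}. The remaining point requiring care is the reality condition: the initial frame must be chosen compatibly with $F_{\bar z}=\overline{F_z}$ (and, for $c\neq0$, the normalization $(F,F)=\pm1$ so that $F$ lies on $S^5$ or $\mathbb{H}^5_1$), and one must check that these constraints are preserved along the flow, so that the solution genuinely lands in $\mathbb{C}^2$ or on the correct quadric rather than in its complexification. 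Uniqueness up to an isometry of $\tilde M^2(4c)$ is then immediate from the uniqueness of the integrated frame modulo the left $G_c$-action.
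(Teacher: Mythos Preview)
Your proposal is correct and matches the paper's approach. The paper establishes the forward direction exactly as you describe---by the frame computations in \S2.1--2.2 together with the citation to \cite{hm} for the $\mathbb{C}P^2$ case---and states the converse without giving an explicit argument, so your Frobenius/Maurer--Cartan integration outline actually supplies more detail than the paper itself records.
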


\begin{remark}
\begin{enumerate}
\item Instead of the $1$-form $\Phi=\phi dz$, Oh \cite{O1}, Schoen and Wolfson \cite{SW} used the
famous Maslov form $\sigma_H$ defined by $\sigma_H=\omega(H,\cdot)$ and in fact $\sigma_H=-(\Phi+\bar\Phi)$, where $H$ is the mean
curvature vector field, or Castro and Urbano introduced a global
vector field ${\mathcal K}=e^{-u}\bar\phi\frac{\partial}{\partial
z}$ over $\Sigma$.
Remark that $|\phi|^2=\frac{1}{2}|H|^2 e^u$.
\item It is known (\cite{Da}) that the Maslov form of a Lagrangian submanifold in an Einstein-K\"{a}hler manifold is always closed.
This is equivalent to the first integrability equation \eqref{eq:integ1_csf} in our cases.
\item Actually the cubic differential $\Psi$ has been introduced by several authors in the study of minimal surfaces in K\"{a}hler
manifolds, for instance, Eells and Wood \cite{EW}, Chern and Wolfson \cite{CW}, etc. It corresponds to the symmetric $3$-tensor field $S(X,Y,Z):=h(II(X,Y),JZ)$ for a Lagrangian submanifold in a K\"{a}hler manifold, where $II$ is the second fundamental form of the submanifold and $h$ is the K\"{a}hler metric tensor of the ambient manifold (refer to, e.g., \cite{CU93}).
\item In order to be consistent with the notations in most publications on Lagrangian surfaces, such as \cite{ CU, CU93}, we define $\Phi$ and $\Psi$ here in terms of the complexified standard inner products. In fact, this definition is same as the one we used in \cite{MM, hm}  given by the Hermitian inner product,  only up to a factor $\sqrt{-1}$.
 \end{enumerate}
\end{remark}

Combining Bonnet theorems, we can summarize the geometric
interpretations of $\Phi$ and $\Psi$ (refer to \cite{CU93, CU, CU97,
hm, HR, Ma, O1 }):

\begin{proposition}
Let $f: \Sigma\rightarrow \tilde{M}(4c)$ be an oriented  immersed
surface $\Sigma$. Then $f$ is minimal if and only if $\Phi\equiv 0$.
 In addition, if $f$ is Lagrangian, then the following statements are equivalent:
 \begin{enumerate}
\item $f$ is Hamiltonian stationary.
\item The Maslov form $\sigma_H$ is a harmonic $1$-form.
\item The Lagrangian angle $\beta$ is a local harmonic function.
\item The mean curvature form $\Phi$ is holomorphic.
\end{enumerate}
\end{proposition}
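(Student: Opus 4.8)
The plan is to funnel all four equivalences through a single analytic quantity, the codifferential $\delta\sigma_H$ of the Maslov form, using the identity $\sigma_H=-(\Phi+\bar\Phi)$ from Remark~1. Before turning to that, I would dispose of the minimality statement, which is the easy part and needs no Lagrangian hypothesis: by the identity $|\phi|^2=\tfrac12|H|^2e^u$ recorded in Remark~1, the mean curvature vector $H$ vanishes identically exactly when $\phi\equiv 0$, i.e.\ when $\Phi\equiv 0$. So minimality $\Leftrightarrow\Phi\equiv0$ is immediate.

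For the four equivalences I assume $f$ Lagrangian and take ``$\sigma_H$ is harmonic'' as the hub. The key preliminary is that $\sigma_H$ is \emph{automatically closed}: this is Dazord's theorem (Remark~2), and it also drops out of the first integrability equation, since $d\sigma_H=(\phi_{\bar z}-\bar\phi_z)\,dz\wedge d\bar z=0$ by \eqref{eq:integ1_csf}. Consequently $\sigma_H$ is harmonic if and only if it is co-closed, i.e.\ $\delta\sigma_H=0$, and it suffices to connect each of (1), (3), (4) to this single condition.

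For (1)$\Leftrightarrow$(2) I would recall the first variation of the area functional under a Hamiltonian deformation $V=J\nabla\varphi$, namely $\frac{d}{dt}\mathrm{Vol}\big|_{0}=-\int_\Sigma\langle H,J\nabla\varphi\rangle\,dA=\pm\int_\Sigma\langle\sigma_H,d\varphi\rangle\,dA$; requiring this to vanish for every $\varphi$ is exactly $\delta\sigma_H=0$, which with closedness is (2). For (2)$\Leftrightarrow$(3), closedness of $\sigma_H$ furnishes a local primitive $\beta$ with $d\beta=\pm\sigma_H$, which is precisely the Lagrangian angle (on the Einstein--K\"ahler background one has $H=J\nabla\beta$); then $\delta\sigma_H=\pm\delta d\beta=\pm\Delta\beta$, so $\sigma_H$ is harmonic iff $\beta$ is a local harmonic function. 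For (2)$\Leftrightarrow$(4) I would compute in the conformal coordinate: with $*dz=-i\,dz$ and $*d\bar z=i\,d\bar z$ one gets $d{*}\sigma_H=-i(\phi_{\bar z}+\bar\phi_z)\,dz\wedge d\bar z$, so $\delta\sigma_H=0$ amounts to $\phi_{\bar z}+\bar\phi_z=0$; feeding in \eqref{eq:integ1_csf} (which gives $\phi_{\bar z}=\bar\phi_z$) collapses this to $\phi_{\bar z}=0$, i.e.\ holomorphicity of $\Phi$.

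The main obstacle is not computational but conceptual bookkeeping: the genuinely non-formal inputs are the first-variation formula together with the Hamiltonian-stationary Euler--Lagrange equation $\delta\sigma_H=0$, and the existence and normalization of the Lagrangian angle with $H=J\nabla\beta$ and $\sigma_H=\pm d\beta$. I expect the delicate point to be matching all the conventions --- the signs, the factor $e^{-u}$, and the normalization relating $\sigma_H$, $\Phi$ and $\beta$ --- so that these identities hold consistently; once that is fixed, the link $\delta\sigma_H=0\Leftrightarrow\phi_{\bar z}=0$ is a one-line Hodge-star computation and the whole proposition follows.
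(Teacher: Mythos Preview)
The paper does not actually prove this proposition: it is presented as a summary of known facts, prefaced by ``Combining Bonnet theorems, we can summarize the geometric interpretations of $\Phi$ and $\Psi$ (refer to \cite{CU93, CU, CU97, hm, HR, Ma, O1})'' and stated without argument. So there is no ``paper's own proof'' to compare against; your task is really to supply the argument the authors delegate to the literature.

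Your sketch does exactly that and is correct. Routing everything through $\delta\sigma_H$ is precisely the standard argument in the cited references (notably Oh and H\'elein--Romon): closedness of $\sigma_H$ from \eqref{eq:integ1_csf} reduces harmonicity to co-closedness; the first-variation identity for Hamiltonian deformations gives (1)$\Leftrightarrow$(2); the Lagrangian angle as a local primitive gives (2)$\Leftrightarrow$(3); and the Hodge-star computation $d{*}\sigma_H=-i(\phi_{\bar z}+\bar\phi_z)\,dz\wedge d\bar z$ together with $\phi_{\bar z}=\bar\phi_z$ gives (2)$\Leftrightarrow$(4). One minor caveat: the opening clause of the proposition is phrased for an arbitrary oriented immersion, but in this paper $\Phi$ has only been defined for Lagrangian $f$ (via the normal frame $Jf_z$), so the identity $|\phi|^2=\tfrac12|H|^2e^u$ you invoke tacitly uses the Lagrangian condition; you may want to flag that the minimality equivalence is being argued in the Lagrangian setting, which is all the paper actually needs.
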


\begin{proposition} \label{prop:psihol}
Let $f:\Sigma^2 \rightarrow \tilde{M}^2(4c)$ be a conformal Lagrangian immersion with the induced metric $g=2e^{u}dzd\bar{z}$.
Then the following statements are equivalent:
\begin{enumerate}
\item The Maslov form $\sigma_H$ is conformal.
\item The vector field $JH$ is a conformal vector field.
\item The cubic Hopf differential $\Psi$ is holomorphic.
\item $(e^{-u}\phi)_z=0$.
\end{enumerate}
\end{proposition}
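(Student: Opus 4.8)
The plan is to route all three equivalences through condition (4), the equation $(e^{-u}\phi)_z=0$, by proving (3) $\Leftrightarrow$ (4), (2) $\Leftrightarrow$ (4), and (1) $\Leftrightarrow$ (2). The equivalence (3) $\Leftrightarrow$ (4) is read off directly from the third integrability equation \eqref{eq:integ3_csf}, $e^{-2u}\psi_{\bar z}=(e^{-u}\phi)_z$. Since $e^{-2u}>0$, the right-hand side vanishes if and only if $\psi_{\bar z}=0$, and $\psi_{\bar z}=0$ is exactly the statement that the cubic differential $\Psi=\psi\,dz^3$ is holomorphic. This step requires no computation beyond reading off \eqref{eq:integ3_csf}.

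For (2) $\Leftrightarrow$ (4) I would first put $JH$ into the moving frame. Because $f$ is Lagrangian, $J$ carries the normal bundle to the tangent bundle, so the normal field $H$ produces a tangent field $JH$ on $\Sigma$ and $\mathcal{L}_{JH}g$ is well defined. Using $\sigma_H=\omega(H,\cdot)$ together with $\omega(X,Y)=h(JX,Y)$ one has $\sigma_H=h(JH,\cdot)=(JH)^\flat$; combined with $\sigma_H=-(\Phi+\bar\Phi)=-(\phi\,dz+\bar\phi\,d\bar z)$ as noted in the Remark, and raising indices with $g^{z\bar z}=e^{-u}$, this gives $JH=-e^{-u}\bar\phi\,\partial_z-e^{-u}\phi\,\partial_{\bar z}$ (equivalently $JH=-(\mathcal{K}+\bar{\mathcal{K}})$ for the Castro--Urbano field $\mathcal{K}=e^{-u}\bar\phi\,\partial_z$). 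On a Riemann surface a tangent field $X$ is conformal precisely when the traceless part of the symmetric tensor $\mathcal{L}_X g$ vanishes, i.e. when its $(2,0)$- and $(0,2)$-components $(\mathcal{L}_X g)_{zz}$ and $(\mathcal{L}_X g)_{\bar z\bar z}$ vanish. A direct computation for the conformal metric $g=2e^u dzd\bar z$ yields $(\mathcal{L}_{JH}g)_{zz}=-2e^u(e^{-u}\phi)_z$ and $(\mathcal{L}_{JH}g)_{\bar z\bar z}=\overline{(\mathcal{L}_{JH}g)_{zz}}$, so both vanish exactly when $(e^{-u}\phi)_z=0$. This establishes (2) $\Leftrightarrow$ (4).

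Finally, (1) $\Leftrightarrow$ (2) is the assertion that $\sigma_H$ is a conformal $1$-form if and only if its metric dual $JH$ is a conformal vector field. This follows from the identity $(\mathcal{L}_X g)(Y,Z)=(\nabla_Y X^\flat)(Z)+(\nabla_Z X^\flat)(Y)$, which shows $\mathcal{L}_{JH}g=2\,\mathrm{Sym}(\nabla\sigma_H)$; hence the traceless part of $\mathcal{L}_{JH}g$ vanishes iff the traceless symmetric part of $\nabla\sigma_H$ vanishes, the latter being the definition of a conformal $1$-form. The step I expect to demand the most care is precisely (1) $\Leftrightarrow$ (2): fixing the notions of ``conformal $1$-form'' and ``conformal vector field'' and checking that they correspond under the musical isomorphism $\flat$. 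By contrast, the only genuine computation, the Lie-derivative identity above, is routine, and it is uniform in $c=0,1,-1$ since the two inputs used, $\sigma_H=-(\phi\,dz+\bar\phi\,d\bar z)$ and \eqref{eq:integ3_csf}, are identical in all three ambient space forms.
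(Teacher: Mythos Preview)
Your argument is correct. The route through condition (4) is natural: the equivalence (3) $\Leftrightarrow$ (4) is immediate from \eqref{eq:integ3_csf}; your computation of $(\mathcal{L}_{JH}g)_{zz}=-2e^u(e^{-u}\phi)_z$ for $JH=-e^{-u}\bar\phi\,\partial_z-e^{-u}\phi\,\partial_{\bar z}$ is right and gives (2) $\Leftrightarrow$ (4); and (1) $\Leftrightarrow$ (2) is, as you say, the musical correspondence between conformal $1$-forms and conformal vector fields via $\mathcal{L}_{X}g=2\,\mathrm{Sym}(\nabla X^\flat)$.

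There is no comparison to make with the paper's own proof: the paper states this proposition as a summary of known facts and refers the reader to \cite{CU93, CU, CU97, hm, HR, Ma, O1} rather than proving it. Your write-up supplies exactly the kind of direct verification those references contain; in particular the characterization (1)--(2) is precisely the Castro--Urbano definition of ``conformal Maslov form'' (so that step is essentially definitional, which is why it feels delicate only at the level of terminology).
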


\section{Lagrangian Bonnet surfaces in complex space forms}

We now suppose that we are given three isometric noncongruent
Lagrangian immersions $f_k: \Sigma\rightarrow \tilde{M}(4c)$,
$k=1,2,3$ with coinciding mean curvature one-form $\Phi$. As
conformal immersions of the same Riemann surface, they are described
by the corresponding cubic Hopf differentials $\Psi_1, \Psi_2, \Psi_3$,
the conformal metric $2e^udzd\bar z$ and the mean curvature form
$\Phi$. Since the surfaces are non-congruent the cubic Hopf differentials
differ.

It follows from \eqref{eq:integ2_csf} and \eqref{eq:integ3_csf} that

\begin{proposition}\label{prop:difference}
Each of the differences $\Psi_{ij}\equiv \Psi_i-\Psi_j$ for $1\leq
i, j \leq 3$, is a holomorphic cubic differential form on $\Sigma$.
Moreover,
\begin{equation}\label{eq:length}
|\Psi_i|=|\Psi_j|, \quad 1\leq i,j\leq 3.
\end{equation}
\end{proposition}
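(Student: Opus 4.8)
The plan is to exploit the fact that all three immersions share the same conformal factor $u$ and the same mean curvature coefficient $\phi$, so that the three systems of integrability equations \eqref{eq:integ1_csf}--\eqref{eq:integ3_csf} differ only through the cubic Hopf coefficients $\psi_1,\psi_2,\psi_3$. Writing \eqref{eq:integ3_csf} for each immersion gives
\begin{equation*}
e^{-2u}(\psi_k)_{\bar z}=(e^{-u}\phi)_z,\qquad k=1,2,3,
\end{equation*}
whose right-hand side is independent of $k$. First I would subtract the equations for two indices $i$ and $j$: the common right-hand side cancels, leaving $e^{-2u}(\psi_i-\psi_j)_{\bar z}=0$, hence $(\psi_i-\psi_j)_{\bar z}=0$ on each coordinate chart. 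Since each $\Psi_k=\psi_k\,dz^3$ is a globally defined cubic differential, so is the difference $\Psi_{ij}=\Psi_i-\Psi_j$, and the vanishing of its $\bar z$-derivative in every chart shows that $\Psi_{ij}$ is holomorphic.

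For the equality of norms I would use the second integrability equation \eqref{eq:integ2_csf}, which for each immersion reads
\begin{equation*}
u_{z\bar z}+|\phi|^2+c e^u-e^{-2u}|\psi_k|^2=0,\qquad k=1,2,3.
\end{equation*}
Here every term except the last is the same for all $k$. Subtracting the equations for $i$ and $j$ makes the common terms cancel and yields $e^{-2u}\bigl(|\psi_i|^2-|\psi_j|^2\bigr)=0$, so $|\psi_i|=|\psi_j|$ pointwise. Because the pointwise norm of the cubic differential $\Psi_k$ with respect to the shared metric $g=2e^udzd\bar z$ is a fixed power of $e^u$ times $|\psi_k|$, the equality $|\psi_i|=|\psi_j|$ upgrades directly to $|\Psi_i|=|\Psi_j|$, which is \eqref{eq:length}.

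I expect the computation itself to be entirely routine — the content is the observation that fixing the metric and the mean curvature form forces the two integrability equations to isolate $\psi_k$ in exactly the way needed. The only point requiring a little care is the global interpretation: the relation $(\psi_i-\psi_j)_{\bar z}=0$ is a priori only local, so one must note that $\Psi_i$ and $\Psi_j$ are genuinely globally defined cubic differentials on $\Sigma$ (as recorded in the construction of $\Psi$) before concluding that $\Psi_{ij}$ is a holomorphic section. Likewise one should confirm that the norm in \eqref{eq:length} is taken with respect to the common induced metric, so that the factor $e^{-2u}$ may be divided out uniformly.
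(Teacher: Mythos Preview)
Your argument is correct and is exactly the approach the paper takes: the statement is recorded there as an immediate consequence of \eqref{eq:integ2_csf} and \eqref{eq:integ3_csf}, precisely by the two subtractions you perform. There is nothing to add.
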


Due to the second statement of Proposition \ref{prop:difference} the zeros of $\Psi_k$ for $k=1,2,3$ coincide.
Clearly, these  points  are contained in the zeros of the holomorphic differential $\Psi_{ij}$.
We call this isolated points set as the umbilic points set of $f_k, k=1,2,3$.
Denote it by
$${\mathcal  U}=\{P\in \Sigma| \Psi_k(P)=0\}.$$

\bigskip

Let $\Sigma$ be a Lagrangian surface in $\tilde{M}$ with the data $\{u, \Phi,\Psi\}$ and $\Sigma^*$ be an isometric deformation of $\Sigma$
preserving the mean curvature form $\Phi$ with the data $\{u, \Phi, \Psi^*\}$.

The following result is the Lagrangian version of Tribuzy's result
(see \cite{Chenli} or \cite{hm}, \cite{T} ).

\begin{lemma}\label{lem:laplace}
Let $\Sigma$ be a Lagrangian Bonnet surface in $\tilde{M}( c )$, then
\begin{equation}\label{eq:laplace}
(\log \psi)_{z\bar{z}}=|(\log \psi)_{\bar z}|^2.
\end{equation}
and it is equivalent to
\begin{equation}\label{eq:keyformula}
(\frac{\psi_{\bar z}}{|\psi|^2})_z=0,
\end{equation}
\end{lemma}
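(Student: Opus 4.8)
The plan is to exploit the defining feature of a Bonnet surface: the existence of a genuine one-parameter family of isometric Lagrangian immersions $f_t:\Sigma\to\tilde M(c)$, $t\in(-\varepsilon,\varepsilon)$, with $f_0=f$, all sharing the same conformal factor $u$ and the same mean curvature form $\Phi=\phi\,dz$, but with differing Hopf differentials $\Psi_t=\psi_t\,dz^3$, $\psi_0=\psi$. First I would read off from \eqref{eq:integ2_csf} that $|\psi_t|^2=e^{2u}(u_{z\bar z}+|\phi|^2+ce^u)$ is independent of $t$, which allows me to write $\psi_t=e^{2i\theta_t}\psi$ on the complement of the isolated umbilic set $\mathcal U$, for a real rotation function $\theta_t$ with $\theta_0=0$. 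Then I would use \eqref{eq:integ3_csf}, which says that $(\psi_t)_{\bar z}=e^{2u}(e^{-u}\phi)_z$ is also independent of $t$; substituting $\psi_t=e^{2i\theta_t}\psi$ and cancelling the common value $(\psi_0)_{\bar z}$ produces the first-order relation
\[(\theta_t)_{\bar z}=\frac{e^{-2i\theta_t}-1}{2i}\,a,\qquad a:=(\log\psi)_{\bar z}=\frac{\psi_{\bar z}}{\psi},\]
in which $a$ depends only on the base surface and is independent of $t$.

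Next I would carry out a short computation. Differentiating this relation in $z$ and using that $\theta_t$ is real, so that $(\theta_t)_z=\overline{(\theta_t)_{\bar z}}=\tfrac{1}{2i}(1-e^{2i\theta_t})\bar a$, the terms should collapse into the single identity
\[(\theta_t)_{z\bar z}=\frac{e^{-2i\theta_t}-1}{2i}\bigl(a_z-|a|^2\bigr).\]
Everything then hinges on one observation: the left-hand side, being proportional to the Laplacian of the real function $\theta_t$, is real, whereas the coefficient $\tfrac{1}{2i}(e^{-2i\theta_t}-1)=-\sin\theta_t\,e^{-i\theta_t}$ is genuinely non-real whenever $\theta_t\not\equiv 0\ (\mathrm{mod}\ \pi)$.

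The hard part will be turning this reality constraint into the vanishing of $a_z-|a|^2$, and this is exactly where the full one-parameter family is indispensable. Writing $a_z-|a|^2=p+iq$ with $p,q$ real and depending only on the point, the requirement that $-\sin\theta_t\,e^{-i\theta_t}(p+iq)$ be real reduces to $q\cos\theta_t-p\sin\theta_t=0$ at each non-umbilic point. Because the family is nontrivial, $\theta_t$ sweeps a range of values at such a point, so this linear relation holds for two angles with distinct tangents and forces $p=q=0$; a single Bonnet partner would supply only one equation and would not suffice. This gives $a_z=|a|^2$, which is \eqref{eq:laplace}. Finally I would check the equivalence with \eqref{eq:keyformula} by the direct computation: expanding $\bigl(\psi_{\bar z}/|\psi|^2\bigr)_z$ and factoring out $1/\bar\psi$ reproduces $a_z-|a|^2$ up to this nonzero factor, so the two vanishing statements coincide. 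I would prove all of this on $\Sigma\setminus\mathcal U$ and extend across the isolated umbilic points by continuity.
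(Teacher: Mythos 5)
Your proposal is correct and takes essentially the same route as the paper: both write the deformed Hopf coefficient as $e^{it}\psi$ via the Gauss equation, obtain the first-order equation for the rotation function from the Codazzi equation, and derive \eqref{eq:laplace} from the compatibility of that equation over the one-parameter family --- your ``reality of $(\theta_t)_{z\bar z}$'' step is precisely the paper's Frobenius condition $d\tau\wedge\tau=0$ followed by the ``arbitrariness of $t$''. Your explicit two-angle linear system in $(p,q)$ just makes that last step a little more concrete than the paper's phrasing.
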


\begin{proof}
From \eqref{eq:integ2_csf}, we have $\psi^{*}=e^{it} \psi$, where $t$ is a real-valued function 
determined up to a multiple of $2\pi$. Then \eqref{eq:integ3_csf} implies that
$$(e^{it}\psi-\psi)_{\bar z}=0,$$
which is equivalent to
\begin{equation}
(e^{it}-1)\psi_{\bar z}+e^{it}it_{\bar z}\psi=0.
\end{equation}
Thus
\begin{equation}
\tau\equiv dt-(1-e^{-it})i\frac{\psi_{\bar{z}}}{\psi}d\bar{z}+(1-e^{it})i\frac{\bar{\psi}_z}{\bar{\psi}}dz=0.
\end{equation}
The Pfaff system $\tau=0$ is completely integrable if and only if $d\tau\wedge \tau=0$, that is,
$$e^{it}[|\frac{\psi_{\bar z}}{\psi}|^2-(\frac{\bar{\psi}_z}{\bar{\psi}})_{\bar{z}}]+e^{-it}[|\frac{\bar{\psi}_{z}}{\bar\psi}|^2-(\frac{\psi_{\bar z}}{\psi})_{z}]
+(\frac{\psi_{\bar z}}{\psi})_z+(\frac{\bar{\psi}_{z}}{\bar{\psi}})_{\bar z} -|\frac{\psi_{\bar z}}{\psi}|^2-|\frac{{\bar\psi}_z}{\bar \psi}|^2=0.$$
Then \eqref{eq:keyformula} follows from the arbitrariness of $t$.
\end{proof}

\begin{corollary}Let $\Sigma$ be a Lagrangian Bonnet surface in $\tilde{M}( c )$, then
\begin{equation}
(\frac{(e^{-u}\phi)_z}{|\phi|^2+e^u(c-K)})_{z}=0.
\end{equation}
\end{corollary}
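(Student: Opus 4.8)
The plan is to reduce the asserted identity directly to the key formula \eqref{eq:keyformula} of Lemma~\ref{lem:laplace}. The idea is that, using the integrability conditions together with the Gauss equation, both the numerator and the denominator of the expression in the corollary can be rewritten purely in terms of $\psi$ and its derivatives, after which the claim becomes an immediate consequence of the lemma.

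First I would handle the numerator. By the third integrability equation \eqref{eq:integ3_csf}, one has $(e^{-u}\phi)_z = e^{-2u}\psi_{\bar z}$, so the numerator already equals $e^{-2u}\psi_{\bar z}$ with no further work.

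Next I would rewrite the denominator, which is where the Gauss curvature $K$ enters. For the conformal metric $g=2e^u dzd\bar z$ a standard computation gives $K=-e^{-u}u_{z\bar z}$, equivalently $u_{z\bar z}=-e^uK$. Substituting this into the second integrability equation \eqref{eq:integ2_csf} yields
\begin{equation*}
|\phi|^2+e^u(c-K)=e^{-2u}|\psi|^2,
\end{equation*}
so the denominator is exactly $e^{-2u}|\psi|^2$. Combining the two observations, the quantity in the corollary collapses to
\begin{equation*}
\frac{(e^{-u}\phi)_z}{|\phi|^2+e^u(c-K)}=\frac{e^{-2u}\psi_{\bar z}}{e^{-2u}|\psi|^2}=\frac{\psi_{\bar z}}{|\psi|^2},
\end{equation*}
and differentiating in $z$ and applying \eqref{eq:keyformula} finishes the argument.

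The computation is short, and the only place demanding care is the Gauss equation: one must get the correct factor and sign in $u_{z\bar z}=-e^uK$ under the normalization $g=2e^udzd\bar z$, since an error there would corrupt the denominator. I would also note that the divisions are carried out on the complement of the umbilic set ${\mathcal U}=\{\psi=0\}$, where $\psi\neq0$, and the identity then extends across ${\mathcal U}$ by continuity. Apart from this bookkeeping, there is no real obstacle: the content is entirely carried by Lemma~\ref{lem:laplace}, and this corollary is merely its translation from the Hopf differential $\psi$ to the mean curvature form $\phi$ via \eqref{eq:integ2_csf} and \eqref{eq:integ3_csf}.
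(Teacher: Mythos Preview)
Your argument is correct and is exactly the intended one: the paper states this corollary without proof, immediately after Lemma~\ref{lem:laplace}, because it is nothing more than the translation of \eqref{eq:keyformula} from $\psi$ to $\phi$ via \eqref{eq:integ2_csf}, \eqref{eq:integ3_csf} and the Gauss equation $u_{z\bar z}=-e^uK$, just as you carry out. The identity $|\psi|^2=e^{3u}(e^{-u}|\phi|^2+c-K)$ that you derive for the denominator is also used verbatim later in the paper.
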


Writing $\psi=|\psi|e^{i\alpha}$. Then from \eqref{eq:laplace}, \eqref{eq:integ2_csf} and \eqref{eq:integ3_csf} we get
\begin{corollary}Let $\Sigma$ be a Lagrangian Bonnet surface in $\tilde{M}( c )$, then
\begin{eqnarray}
&&(\log (e^{3u}(e^{-u}|\phi|^2+c-K)))_{z\bar z}-\frac{2e^u|(e^{-u}\phi)_z|^2}{e^{-u}|\phi|^2+c-K}=0, \label{eq:log}\\
&&\alpha_{z\bar z}=0.
\end{eqnarray}
\end{corollary}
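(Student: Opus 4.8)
The plan is to collapse both identities to the single relation furnished by Lemma~\ref{lem:laplace}, by first rewriting the quantity inside the logarithm in \eqref{eq:log} as $|\psi|^2$. First I would invoke the Gauss equation for the conformal metric $g=2e^udzd\bar z$, namely $u_{z\bar z}=-e^uK$ with $K$ the Gauss curvature, and substitute it into \eqref{eq:integ2_csf}. This gives $e^{-2u}|\psi|^2=|\phi|^2+e^u(c-K)$, i.e.
\begin{equation*}
|\psi|^2=e^{3u}\bigl(e^{-u}|\phi|^2+c-K\bigr),
\end{equation*}
so that the argument of the logarithm in \eqref{eq:log} is exactly $|\psi|^2$, and the first term of \eqref{eq:log} becomes $(\log|\psi|^2)_{z\bar z}$.

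Next I would simplify the second term. Writing $(e^{-u}\phi)_z=e^{-2u}\psi_{\bar z}$ via \eqref{eq:integ3_csf}, and using $e^{-u}|\phi|^2+c-K=e^{-3u}|\psi|^2$ from the identity above, the fraction collapses to
\begin{equation*}
\frac{2e^u|(e^{-u}\phi)_z|^2}{e^{-u}|\phi|^2+c-K}=\frac{2|\psi_{\bar z}|^2}{|\psi|^2}.
\end{equation*}
At this point \eqref{eq:log} is equivalent to the single scalar equation $(\log|\psi|^2)_{z\bar z}=2|\psi_{\bar z}|^2/|\psi|^2$ on $\Sigma\setminus\mathcal U$.

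The decisive step then extracts both conclusions from Lemma~\ref{lem:laplace} simultaneously. Writing $\psi=|\psi|e^{i\alpha}$ I would split $\log\psi=\log|\psi|+i\alpha$ into its real and imaginary parts, so that $(\log\psi)_{z\bar z}=(\log|\psi|)_{z\bar z}+i\,\alpha_{z\bar z}$. Lemma~\ref{lem:laplace} asserts $(\log\psi)_{z\bar z}=|(\log\psi)_{\bar z}|^2=|\psi_{\bar z}/\psi|^2=|\psi_{\bar z}|^2/|\psi|^2$, which is \emph{real}. Matching imaginary parts then forces $\alpha_{z\bar z}=0$, the second assertion; matching real parts gives $(\log|\psi|)_{z\bar z}=|\psi_{\bar z}|^2/|\psi|^2$, and doubling yields $(\log|\psi|^2)_{z\bar z}=2|\psi_{\bar z}|^2/|\psi|^2$, the reduced form of \eqref{eq:log}. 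Thus both equations follow at once.

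I expect the genuine difficulty to lie in bookkeeping rather than ideas. The one point that must be pinned down carefully is the normalization of the Gauss relation $u_{z\bar z}=-e^uK$, since any stray constant there would mismatch the argument of the logarithm against $|\psi|^2$; checking the conformal-factor convention for $g=2e^udzd\bar z$ settles this. The other point is that $\log\psi$ is singular precisely on the umbilic set $\mathcal U=\{\psi=0\}$, so all manipulations are to be carried out on $\Sigma\setminus\mathcal U$; since $\mathcal U$ is isolated, the resulting smooth identities extend across it by continuity.
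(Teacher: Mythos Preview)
Your proposal is correct and follows exactly the route the paper indicates: rewrite $e^{3u}(e^{-u}|\phi|^2+c-K)$ as $|\psi|^2$ via \eqref{eq:integ2_csf} and $u_{z\bar z}=-e^uK$, reduce the second term using \eqref{eq:integ3_csf}, and then split \eqref{eq:laplace} into real and imaginary parts after writing $\psi=|\psi|e^{i\alpha}$. The paper gives no details beyond citing these three equations, so your write-up is a faithful expansion of precisely that argument.
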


\begin{definition}
A Lagrangian surface $\Sigma$ in $\tilde{M}$ is isothermic if and
only if there exists locally a conformal parameter $z$ such that
$\psi dz^3$ satisfies $\psi(z,\bar{z})\in
\mathbb{R}$.\end{definition}




\begin{proposition}
A Lagrangian surface $\Sigma$ in $\tilde{M}$ is isothermic if and
only if $\mathrm{Im}(\log \psi)_{z\bar z}=0$.
\end{proposition}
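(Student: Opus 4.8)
The plan is to reduce everything to the transformation law of the cubic differential's coefficient under a holomorphic change of conformal coordinate and to the local solvability of the Cauchy--Riemann equations. If $w=w(z)$ is such a change, then $\Psi=\psi\,dz^3=\tilde\psi\,dw^3$ with $\tilde\psi=\psi\,(dz/dw)^3$, so that, on the locus where $\psi\neq 0$,
\[
\mathrm{Im}(\log\tilde\psi)=\mathrm{Im}(\log\psi)+3\,\mathrm{Im}\bigl(\log\tfrac{dz}{dw}\bigr).
\]
Since $dz/dw$ is holomorphic and nonvanishing, $\mathrm{Im}(\log(dz/dw))$ is harmonic; thus $\mathrm{Im}(\log\psi)$ is determined only up to addition of a harmonic function, and, together with the identity $\partial_z\partial_{\bar z}=|w_z|^2\,\partial_w\partial_{\bar w}$ for holomorphic $w(z)$, this shows that the vanishing of $\mathrm{Im}(\log\psi)_{z\bar z}$ is independent of the chosen conformal coordinate. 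The statement then becomes the observation that $\Sigma$ is isothermic exactly when $\mathrm{Im}(\log\psi)$ can be annihilated by such a harmonic correction.

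For the forward implication I would assume $\Sigma$ isothermic and choose the conformal parameter $w$ in which $\tilde\psi\in\mathbb{R}$. Then $\mathrm{Im}(\log\tilde\psi)$ is locally constant, and reading the displayed identity as $\mathrm{Im}(\log\psi)=\mathrm{Im}(\log\tilde\psi)+3\,\mathrm{Im}(\log(dz/dw))$ exhibits $\mathrm{Im}(\log\psi)$ as the sum of a locally constant function and a function harmonic in $z$; hence $\mathrm{Im}(\log\psi)$ is harmonic, i.e. $\mathrm{Im}(\log\psi)_{z\bar z}=0$.

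For the converse I would set $\alpha:=\mathrm{Im}(\log\psi)$ and assume $\alpha_{z\bar z}=0$, so that $\alpha$ is harmonic on $\{\psi\neq 0\}$. On a simply connected neighborhood, $\alpha/3$ admits a harmonic conjugate, so there is a holomorphic function $\mu$ with $\mathrm{Im}\,\mu=\alpha/3$. Putting $\lambda:=e^{\mu}$, which is holomorphic and nonvanishing, and defining the new parameter by $w:=\int\lambda\,dz$, one gets $dw/dz=\lambda\neq 0$, a genuine conformal coordinate, in which the displayed identity gives $\mathrm{Im}(\log\tilde\psi)=\alpha-3\,\mathrm{Im}\,\mu=0$; hence $\tilde\psi\in\mathbb{R}$ and $\Sigma$ is isothermic.

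The only delicate point is the behavior at the zeros of $\psi$ (the umbilic set $\mathcal U$), where $\log\psi$ is singular and the argument $\alpha$ is multivalued: I would carry out both the harmonic-conjugate construction and the reparametrization on the open set $\Sigma\setminus\{\psi=0\}$, which is exactly where both the hypothesis $\mathrm{Im}(\log\psi)_{z\bar z}=0$ and the conclusion are meaningful, noting that at a zero of $\psi$ the condition $\psi\in\mathbb{R}$ holds trivially. When $\Psi$ is in addition holomorphic (as in Proposition~\ref{prop:psihol}, or for the differences in Proposition~\ref{prop:difference}) these zeros are isolated, so the local reparametrizations patch together away from a discrete set; in the general case the purely local nature of the definition of isothermic makes this the only care required, and the existence of the harmonic conjugate is unproblematic on simply connected neighborhoods.
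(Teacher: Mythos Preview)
Your argument is correct and is exactly the natural one: under a holomorphic change of conformal coordinate the coefficient of the cubic differential picks up the cube of a holomorphic nonvanishing factor, so $\mathrm{Im}(\log\psi)$ changes by a harmonic function, and the question of whether one can make $\psi$ real becomes the question of whether $\mathrm{Im}(\log\psi)$ is itself harmonic. The paper states this proposition without proof, so there is no argument to compare against; your write-up fills the gap cleanly.

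One cosmetic slip: when you rearrange the displayed identity in the forward direction you should get
\[
\mathrm{Im}(\log\psi)=\mathrm{Im}(\log\tilde\psi)-3\,\mathrm{Im}\bigl(\log\tfrac{dz}{dw}\bigr),
\]
with a minus sign rather than the plus you wrote. This is immaterial for the proof, since a constant minus a harmonic function is still harmonic.
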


\begin{proposition}
A Lagrangian surface $\Sigma$ in $\tilde{M}$ is  a Lagrangian Bonnet
surface if and only if
\begin{enumerate}
\item $\Sigma$ is Lagrangian isothermic,
\item \eqref{eq:log} holds.
\end{enumerate}
\end{proposition}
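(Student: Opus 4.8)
The plan is to reduce everything to the defining equation of a Lagrangian Bonnet surface supplied by Lemma~\ref{lem:laplace}, namely \eqref{eq:laplace}, and then to split that single complex equation into its real and imaginary parts. Writing $\psi=|\psi|e^{i\alpha}$ so that $\log\psi=\log|\psi|+i\alpha$, I would first observe that the right-hand side $|(\log\psi)_{\bar z}|^2$ of \eqref{eq:laplace} is real, whereas $(\log\psi)_{z\bar z}=(\log|\psi|)_{z\bar z}+i\,\alpha_{z\bar z}$ with both $(\log|\psi|)_{z\bar z}$ and $\alpha_{z\bar z}$ real (since $\partial_z\partial_{\bar z}$ of a real function is real). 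Hence \eqref{eq:laplace} is equivalent to the simultaneous vanishing of its imaginary part and the equality of the real parts. This structural observation drives the whole argument and yields both implications of the stated equivalence at once.

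For the imaginary part, the equation reads exactly $\alpha_{z\bar z}=0$, that is, $\mathrm{Im}(\log\psi)_{z\bar z}=0$; by the preceding Proposition this is precisely the statement that $\Sigma$ is Lagrangian isothermic, which is condition~(1). For the real part, I would transcribe $(\log|\psi|)_{z\bar z}=|(\log\psi)_{\bar z}|^2=|\psi_{\bar z}|^2/|\psi|^2$ into the geometric quantities appearing in \eqref{eq:log}. The key substitutions are: from \eqref{eq:integ2_csf} together with $K=-e^{-u}u_{z\bar z}$ one obtains $|\psi|^2=e^{3u}\big(e^{-u}|\phi|^2+c-K\big)$, so that $\log\big(e^{3u}(e^{-u}|\phi|^2+c-K)\big)=\log|\psi|^2=2\log|\psi|$; and from \eqref{eq:integ3_csf} one has $(e^{-u}\phi)_z=e^{-2u}\psi_{\bar z}$, whence $|\psi_{\bar z}|^2=e^{4u}|(e^{-u}\phi)_z|^2$. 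Feeding these into the doubled real-part identity $2(\log|\psi|)_{z\bar z}=2|\psi_{\bar z}|^2/|\psi|^2$, and using $e^{-u}|\phi|^2+c-K=e^{-3u}|\psi|^2$ to clear the denominator, should turn the real part into exactly \eqref{eq:log}, which is condition~(2).

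Finally, since \eqref{eq:laplace} holds if and only if both its real and imaginary parts hold, and each of these has been identified with (1) and (2) respectively, the equivalence follows in both directions simultaneously. The only genuinely delicate point I anticipate is the bookkeeping in the real-part computation: one must track carefully the powers of $e^u$ and the factor $2$ coming from $\log|\psi|^2=2\log|\psi|$, and confirm that the curvature combination $c-K$ is precisely what makes $e^{-u}|\phi|^2+c-K=e^{-3u}|\psi|^2$, so that the denominator in \eqref{eq:log} matches $|\psi|^2$ up to the expected power of $e^u$. Once these substitutions are verified against the integrability conditions, no further analytic input is needed.
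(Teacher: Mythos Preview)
The paper states this proposition without proof, so there is no explicit argument to compare against. Your approach is correct and is precisely the natural proof implicit in the paper's layout: the preceding corollary already records that Lemma~\ref{lem:laplace} splits into \eqref{eq:log} and $\alpha_{z\bar z}=0$, and the preceding proposition identifies the latter with the isothermic condition; your proposal simply makes this decomposition explicit and checks the substitutions.

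One small point worth flagging: you invoke Lemma~\ref{lem:laplace} as an equivalence, whereas it is phrased only as an implication. The converse direction (that \eqref{eq:keyformula} guarantees a one-parameter family of deformations) does follow from Frobenius integrability of the Pfaff system $\tau=0$ together with the observation that $\psi^*=e^{it}\psi$ then satisfies \eqref{eq:integ2_csf} and \eqref{eq:integ3_csf}; you may wish to say one sentence to this effect so that the ``if'' half of the proposition is fully justified.
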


\begin{theorem}
A Lagrangian surface $\Sigma$ in $\tilde{M}$ is  a Lagrangian Bonnet
surface. Then
\begin{enumerate}
\item $\Sigma$ is Lagrangian isothermic,
\item $\frac{1}{\psi}$ is harmonic with respect to its isothermic coordinate, i.e. $(\frac{1}{\psi})_{z\bar z}=0$.
\end{enumerate}
\end{theorem}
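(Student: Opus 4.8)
The plan is to read both conclusions off Lemma~\ref{lem:laplace}, whose two equivalent forms \eqref{eq:laplace} and \eqref{eq:keyformula} already contain the needed information; the only genuine care is in passing to an isothermic coordinate for the second assertion, and in excising the isolated umbilic set $\mathcal U$ on which $\log\psi$ and $1/\psi$ degenerate. For conclusion~(1) I would start from \eqref{eq:laplace}, $(\log\psi)_{z\bar z}=|(\log\psi)_{\bar z}|^2$. Since the right-hand side is nonnegative real, $(\log\psi)_{z\bar z}$ is real, so $\mathrm{Im}\,(\log\psi)_{z\bar z}=0$ in the given conformal coordinate. By the preceding proposition characterizing isothermic Lagrangian surfaces ($\Sigma$ is isothermic if and only if $\mathrm{Im}(\log\psi)_{z\bar z}=0$), this is precisely the assertion that $\Sigma$ is Lagrangian isothermic, with no further computation.

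For conclusion~(2) I would then fix an isothermic conformal coordinate $z$ furnished by~(1), in which $\psi$ is real-valued; consequently $|\psi|^2=\psi^2$ and $\psi_{\bar z}/|\psi|^2=\psi_{\bar z}/\psi^2=-(1/\psi)_{\bar z}$. The key point is that \eqref{eq:keyformula}, $(\psi_{\bar z}/|\psi|^2)_z=0$, remains valid in this particular coordinate. This is legitimate because the derivation of Lemma~\ref{lem:laplace} --- obtaining $|\psi|=|\psi^{*}|$ from \eqref{eq:integ2_csf}, then $\psi^{*}=e^{it}\psi$ and $(e^{it}\psi-\psi)_{\bar z}=0$ from \eqref{eq:integ3_csf}, and finally \eqref{eq:keyformula} from the complete integrability of the Pfaff system $\tau=0$ --- uses only the integrability equations, which hold in every conformal coordinate on $\Sigma$. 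Substituting $\psi_{\bar z}/|\psi|^2=-(1/\psi)_{\bar z}$ into \eqref{eq:keyformula} and commuting the mixed derivatives gives $(1/\psi)_{z\bar z}=-(\psi_{\bar z}/|\psi|^2)_z=0$, which is exactly conclusion~(2).

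Each step is short, so the real content is organizational, and I expect the main obstacle to be precisely the coordinate issue just flagged: one must ensure that \eqref{eq:keyformula} is genuinely available in the isothermic coordinate and not merely in the coordinate in which Lemma~\ref{lem:laplace} was first recorded. Once the coordinate-independence of the integrability equations \eqref{eq:integ2_csf}--\eqref{eq:integ3_csf} (and hence of the Lemma) is acknowledged, the elementary identity $\psi_{\bar z}/\psi^2=-(1/\psi)_{\bar z}$ for real $\psi$ closes the argument. Throughout I would restrict to $\Sigma\setminus\mathcal U$, since both $\log\psi$ and $1/\psi$ are undefined at the common umbilic points where $\psi$ vanishes.
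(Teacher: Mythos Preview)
Your argument is correct and is exactly the derivation the paper has in mind: the theorem is stated without proof, immediately after Lemma~\ref{lem:laplace} and the propositions characterizing isothermic Lagrangian surfaces, so the intended proof is precisely to read off $\mathrm{Im}(\log\psi)_{z\bar z}=0$ from \eqref{eq:laplace} for part~(1), and then, in an isothermic coordinate where $\psi$ is real and $|\psi|^2=\psi^2$, to rewrite \eqref{eq:keyformula} as $(1/\psi)_{z\bar z}=0$ for part~(2). Your care about the coordinate-independence of \eqref{eq:keyformula} and about working on $\Sigma\setminus\mathcal U$ is appropriate and more explicit than the paper itself.
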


Let $\Sigma$ be a Lagrangian Bonnet surface in $\tilde{M}$. We can
choose isothermic coordinate $z$ such that $1/\psi$ is a real-valued
harmonic function of $z$. From $(\frac{1}{\psi})_{z\bar z}=0$, we
have
\begin{equation}\label{eq:psi_inverseHarm}
\frac{1}{\psi}=h+\bar{h}
\end{equation}
 for some holomorphic function $h$.

Substituting \eqref{eq:psi_inverseHarm} into \eqref{eq:integ3_csf}, we get
$$h_z(e^{-u}\phi)_z=\bar{h}_{\bar z}(e^{-u}\bar{\phi})_{\bar z}.$$
The Codazzi equation \eqref{eq:integ3_csf} implies
\begin{equation}\label{e2u}e^{2u}=\frac{\psi_{\bar
z}}{(e^{-u}\phi)_z}=-\frac{\bar{h}_{\bar{z}}}{(h+\bar{h})^2
(e^{-u}\phi)_z}=-\frac{h_z}{(h+\bar{h})^2
(e^{-u}\bar{\phi})_{\bar{z}}}.\end{equation}
 Differentiating it twice, we have
\begin{equation}2u_z=\frac{h_{zz}}{h_z}-\frac{2h_z}{h+\bar{h}}-\frac{(e^{-u}\bar{\phi})_{\bar{z}z}}{(e^{-u}\bar{\phi})_{\bar{z}}}\end{equation}
 \begin{eqnarray} \label{u_zz}2u_{z\bar{z}}&=& \frac{h_{zz\bar{z}}}{h_z}-\frac{h_{zz}h_{z\bar{z}}}{(h_z)^2}
-\frac{2h_{z\bar{z}}}{h+\bar{h}}+\frac{2h_z\bar{h}_{\bar{z}}}{(h+\bar{h})^2}-
\left(\frac{(e^{-u}\phi)_{\bar{z}z}}{(e^{-u}\bar{\phi})_{\bar{z}}}\right)_{\bar{z}}\\
&=&\frac{2|h_z|^2}{(h+\bar{h})^2}-\left(\frac{(e^{-u}\phi)_{\bar{z}z}}{(e^{-u}\bar{\phi})_{\bar{z}}}\right)_{\bar{z}}\end{eqnarray}
From Gauss equation we know that
$$\begin{array}{rl}u_{z\bar{z}}=&e^{-2u}|\psi|^2-(|\phi|^2+ce^u)=e^{-2u}|\psi|^2-e^{2u}(|e^{-u}\phi|^2)-ce^u\\
=&-\frac{(e^{-u}\bar{\phi})_{\bar{z}}}{h_z}+\frac{h_z}{(h+\bar{h})^2}\frac{|e^{-u}\phi|^2}{(e^{-u}\bar{\phi})_{\bar{z}}}-ce^u\end{array}$$
Substituting \eqref{e2u} and \eqref{u_zz} into Gauss equation, we
get
\begin{eqnarray}\label{P1}\left(\frac{(e^{-u}\phi)_{\bar{z}z}}{(e^{-u}\bar{\phi})_{\bar{z}}}\right)_{\bar{z}}-\frac{(e^{-u}\bar{\phi})_{\bar{z}}}{h_z}
&=&\frac{2|h_z|^2}{(h+\bar{h})^2}-\frac{2h_z|e^{-u}\phi|^2}{(h+\bar{h})^2(e^{-u}\bar{\phi})_{\bar{z}}}
+2ce^u\end{eqnarray}

If $\phi$ is a real-value function on $\Sigma$,  we have the
following result.

\begin{theorem} Let $\Sigma$ be a Lagrangian Bonnet
surface in $\mathbb{C}^2$ with isothermic coordinates $z, \bar{z}$.
Then
$$w=w(z)=\int \frac{1}{h_z(z)}dz$$
is also a conformal coordinate, and the mean curvature form
$e^{-u}\phi$, 
$\frac{e^{2u}}{h_z}$ and $ \frac{|\psi|}{|h_z|}$are functions of
$$t=w+\bar{w}$$ only.\end{theorem}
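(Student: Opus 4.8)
The plan is to work in the new coordinate $w$ and to replace the phrase ``is a function of $t=w+\bar w$ only'' by a single differential criterion, which I then check term by term. First I would record that $w$ is conformal: since $h$ is holomorphic, $1/h_z$ is holomorphic wherever $h_z\neq0$, so $w=\int(1/h_z)\,dz$ is a holomorphic function of $z$ with $w_z=1/h_z\neq0$, hence a local biholomorphism. From $w_z=1/h_z$ and $w_{\bar z}=0$ one gets $\partial_w=h_z\,\partial_z$ and $\partial_{\bar w}=\overline{h_z}\,\partial_{\bar z}$, so a function $G$ on $\Sigma$ depends on $t=w+\bar w$ alone exactly when $G_w=G_{\bar w}$, i.e.
\[
h_z\,G_z=\overline{h_z}\,G_{\bar z}.
\]
This identity is the only working criterion I would use.

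Next I would dispose of $G=e^{-u}\phi$. Since $z$ is isothermic we have $\psi=1/(h+\bar h)$ real, and for real $\phi$ the two expressions for $e^{2u}$ already displayed in \eqref{e2u} become $-\frac{\overline{h_z}}{(h+\bar h)^2(e^{-u}\phi)_z}$ and $-\frac{h_z}{(h+\bar h)^2(e^{-u}\phi)_{\bar z}}$; equating them gives precisely
\[
h_z\,(e^{-u}\phi)_z=\overline{h_z}\,(e^{-u}\phi)_{\bar z}.
\]
Thus the criterion holds for $G=e^{-u}\phi$, so $e^{-u}\phi=\chi(t)$ is a function of $t$, and I would record the consequences $(e^{-u}\phi)_z=\chi'(t)/h_z$ and $(e^{-u}\phi)_{\bar z}=\chi'(t)/\overline{h_z}$.

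Finally, for the remaining two quantities I would substitute $e^{-u}\phi=\chi(t)$ back into \eqref{e2u}, which turns it into $e^{2u}=-\frac{|h_z|^2}{(h+\bar h)^2\chi'(t)}$ and thereby writes each of $e^{2u}/h_z$ and $|\psi|/|h_z|$ as an explicit factor built from $h_z$ and $h+\bar h$ times a function of $t$. The assertion then reduces to verifying the criterion $G_w=G_{\bar w}$ for those purely $h$-dependent factors, and this is \emph{the hard part}: it is exactly here that the Gauss equation \eqref{eq:integ2_csf} must enter, in the processed form \eqref{P1}. Substituting $\chi=\chi(t)$ into \eqref{P1} collapses its entire left-hand side to a function of $t$ alone, which forces the companion factor assembled from $h_z$ and $h+\bar h$ to be a function of $t$ as well. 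The delicate bookkeeping is to combine the holomorphicity of $h$ (so that $h_{z\bar z}=0$ and every $\bar z$-derivative of $h_z$ vanishes) with the first integrability equation \eqref{eq:integ1_csf}, which for real $\phi$ forces $\phi$ to depend on $\mathrm{Re}\,z$ only; it is the interplay of these two facts that pins down the $h$-factor and identifies $e^{2u}/h_z$ and $|\psi|/|h_z|$ as functions of $t$. I expect the obstacle here to be computational stamina in the Gauss-equation step rather than any conceptual gap.
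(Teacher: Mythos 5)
Your first half is sound and coincides with the paper's argument: $w$ is conformal because $1/h_z$ is holomorphic, the criterion $G_w=G_{\bar w}$ correctly encodes ``function of $t$ only,'' and equating the two expressions in \eqref{e2u} (equivalently, substituting $1/\psi=h+\bar h$ into the Codazzi equation) gives $h_z(e^{-u}\phi)_z=\bar h_{\bar z}(e^{-u}\bar\phi)_{\bar z}$, which for real $\phi$ is exactly $(e^{-u}\phi)_w=(e^{-u}\phi)_{\bar w}$. Up to there you are reproducing the paper.

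The second half, however, is where the actual content lies, and you have left it as a declaration of intent rather than a proof, and as stated it has a genuine logical gap. First, the ``companion factor'' must be identified explicitly: after substituting $e^{-u}\phi=\chi(t)$ and multiplying through by $|h_z|^2$, equation \eqref{P1} with $c=0$ takes the form $A(t)=2Q^2B(t)$ where $Q=|h_z|^2/(h+\bar h)$, $A(t)=\bigl(\chi''/\chi'\bigr)'-2\chi'$ and $B(t)=1-\chi^2/\chi'$. From such an identity you can only conclude that $Q^2$ is a function of $t$ \emph{provided} $B\not\equiv 0$; the degenerate case $\chi'\equiv\chi^2$ must be excluded, and your word ``forces'' silently skips this. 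The paper closes this hole by noting that $\chi'=\chi^2$ would give $(e^{-u}\phi)'_t\ge 0$, while \eqref{e2u} together with $e^{2u}>0$ requires $(e^{-u}\phi)'_t<0$ — a sign argument you never invoke. Second, your proposed mechanism for ``pinning down the $h$-factor,'' namely that \eqref{eq:integ1_csf} with $\phi$ real makes $\phi$ depend on $\mathrm{Re}\,z$ only, is a red herring: the theorem concerns dependence on $t=\mathrm{Re}(2w)$, which is a different real coordinate from $\mathrm{Re}\,z$, and this fact plays no role in the paper's derivation. Finally, once $Q$ is known to depend on $t$, the conclusions for $e^{2u}/h_z$ and $|\psi|/|h_z|$ still require the explicit computations $|\psi|^2/|h_z|^2=Q^2$ (using the transformed Hopf differential $\psi(w,\bar w)=h_z^3/(h+\bar h)$) and $e^{2u}/h_z=Q^2/(e^{-u}\phi)'_t$; these are short but they are not ``purely $h$-dependent factors'' to be checked against the criterion, as you suggest, but identities expressing the quantities directly in terms of $Q$ and $\chi$.
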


\begin{pf}By the chain rule we get that
$(e^{-u}\phi)_w=h_z(e^{-u}\phi)_z$ and
$(e^{-u}\phi)_{\bar{w}}=\bar{h}_{\bar{z}}(e^{-u}\phi)_{\bar{z}}$
which implies that
$$(e^{-u}\phi)_w=(e^{-u}\bar{\phi})_{\bar w}.$$Put $2w=t+is$, then we get
$$(e^{-u}\phi)_{t}=(e^{-u}\phi)_{w}=(e^{-u}\phi)_{\bar{w}}$$which
shows that $(e^{-u}\phi)$ depends on one variable $t$ only.
$$e^{2u}=\frac{\psi_{\bar
z}}{(e^{-u}\phi)_z}=-\frac{|h_z|^2}{(h+\bar{h})^2 (e^{-u}\phi)_w}.$$
$$(e^{-u}\phi)_{z\bar{z}}=(e^{-u}\phi)_ww_z)_{\bar{z}}=(e^{-u}\phi)_{w\bar{w}}w_z\bar{w}_{\bar{z}}=\frac{(e^{-u}\phi)_{w\bar{w}}}{|h_z|^2}$$
Therefore, we have $$
\left(\frac{(e^{-u}\phi)_{\bar{z}z}}{(e^{-u}\bar{\phi})_{\bar{z}}}\right)_{\bar{z}}
=\frac{1}{|h_z|^2}\left(\frac{(e^{-u}\phi)_t^{''}}{(e^{-u}\phi)_t^{'}}\right)_t^{'}$$
Then \eqref{P1} can be rewritten as
$$\frac{1}{|h_z|^2}\left[\left(\frac{(e^{-u}\phi)_t^{''}}{(e^{-u}\phi)_t^{'}}\right)_t^{'}-(e^{-u}\phi)_t^{'}\right]=
\frac{2|h_z|^2}{(h+\bar{h})^2}-\frac{2|h_z|^2|e^{-u}\phi|^2}{(h+\bar{h})^2(e^{-u}\bar{\phi})_t^{'}}
+2c\frac{|h_z|}{(h+\bar{h})\sqrt{-(e^{-u}\phi)_t^{'}}}$$
$$\Rightarrow\quad
\left(\frac{(e^{-u}\phi)_t^{''}}{(e^{-u}\phi)_t^{'}}\right)_t^{'}-2(e^{-u}\phi)_t^{'}=
\frac{2|h_z|^4}{(h+\bar{h})^2}\left(1-\frac{|e^{-u}\phi|^2}{(e^{-u}\bar{\phi})_t^{'}}\right)
+2c\frac{|h_z|^3}{(h+\bar{h})\sqrt{-(e^{-u}\phi)_t^{'}}}$$ When
$c=0$, we can get
\begin{equation}\label{P2}\left(\frac{(e^{-u}\phi)_t^{''}}{(e^{-u}\phi)_t^{'}})_t-2(e^{-u}\phi)\right)^{'}_t=\frac{2|h_z|^4}{(h+\bar{h})^2}
\left(1-\frac{(e^{-u}\phi)^2}{(e^{-u}\phi)_t^{'}}\right)=2Q^2\left(1-\frac{(e^{-u}\phi)^2}{(e^{-u}\phi)_t^{'}}\right)\end{equation}
where $Q=\frac{|h_z|^2}{(h+\bar{h}) }$. This formula implies that
$Q$ is a function of $t$ unless
$\frac{(e^{-u}\phi)_t^{''}}{(e^{-u}\phi)_t^{'}})_t-2(e^{-u}\phi)$
and $1-\frac{(e^{-u}\phi)^2}{(e^{-u}\phi)_t^{'}}$ must vanish
identically. $(e^{-u}\phi)_t^{'}=(e^{-u}\phi)^2$ means that
$(e^{-u}\phi)_t^{'}>0$, but this is a contradiction to \eqref{e2u}.

Since $\tilde{\psi}(z, \bar{z})dz^3=\psi(w, \bar{w})dw^3$, where
$\tilde{\psi}$ is the Hopf differential form with respect
 to the coordinates $z, \bar{z}$, we get
 $$\psi(w, \bar{w})=\frac{h_z^3(w^{-1}(w))}{h(w^{-1}(w))+\bar{h}(\bar{w}^{-1}(\bar{w}))}.$$
$$\psi_{\bar{w}}=\frac{h_z|h_z|^4}{(h+\bar{h})^2}, \bar{\psi}_w=\frac{\bar{h}_{\bar{z}}|h_z|^4}{(h+\bar{h})^2}$$
$$|\psi|^2=\psi_{\bar{w}}\bar{h}_{\bar{z}}=\bar{\psi}_wh_z=\frac{|h_z|^6}{(h+\bar{h})^2}=|h_z|^2Q^2.$$
Then $\frac{2|\psi|^2}{|h_z|^2}$ is a function of $t$ only.
$$\frac{e^{2u}}{h_z}= \frac{\psi_{\bar{w}}}{h_z(e^{-u}\phi)_w}=\frac{Q^2}{(e^{-u}\phi)_t^{'}}$$
It's depends on $t$ only ,too.
\end{pf}
\begin{theorem}The holomorphic function $h=h(z)$ satisfies the
differential equation
$$h_{zz}(h+\bar{h})-h_z^2=\bar{h}_{\bar{z}\bar{z}}(h+\bar{h})-\bar{h}^2_{\bar{z}}$$ \end{theorem}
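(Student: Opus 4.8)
The plan is to recognize the stated identity as a reality condition and to derive it from an invariance already obtained in the previous theorem. Set $A := h_{zz}(h+\bar h)-h_z^2$. Since $h$ is holomorphic and $h+\bar h=1/\psi$ is real, conjugation gives $\bar A=\bar h_{\bar z\bar z}(h+\bar h)-\bar h_{\bar z}^2$, so the asserted equation is exactly $A=\bar A$; that is, the claim amounts to saying that $h_{zz}(h+\bar h)-h_z^2$ is real-valued.

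The engine of the proof is the function $Q=\dfrac{|h_z|^2}{h+\bar h}$ from the proof of the previous theorem. Directly from the computations there, $\dfrac{|\psi|}{|h_z|}=\dfrac{|h_z|^2}{h+\bar h}=Q$, and the previous theorem asserts that this is a function of $t=w+\bar w$ alone. First I would convert this invariance into an analytic equation. Since the coordinate change satisfies $w_z=1/h_z$, the chain rule yields $Q_w=h_zQ_z$ and $Q_{\bar w}=\bar h_{\bar z}Q_{\bar z}$, and independence of $Q$ from the conjugate variable is equivalent to $Q_w=Q_{\bar w}$. Hence the invariance reads
$$h_zQ_z=\bar h_{\bar z}\,Q_{\bar z}.$$

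Next I would compute both sides explicitly. Differentiating $Q=\dfrac{h_z\bar h_{\bar z}}{h+\bar h}$ and using $h_{\bar z}=0$ and $(\bar h_{\bar z})_z=0$ gives
$$Q_z=\frac{\bar h_{\bar z}\,[\,h_{zz}(h+\bar h)-h_z^2\,]}{(h+\bar h)^2},\qquad Q_{\bar z}=\frac{h_z\,[\,\bar h_{\bar z\bar z}(h+\bar h)-\bar h_{\bar z}^2\,]}{(h+\bar h)^2}.$$
Substituting these into $h_zQ_z=\bar h_{\bar z}Q_{\bar z}$, both sides carry the common factor $\dfrac{h_z\bar h_{\bar z}}{(h+\bar h)^2}$; cancelling it produces precisely
$$h_{zz}(h+\bar h)-h_z^2=\bar h_{\bar z\bar z}(h+\bar h)-\bar h_{\bar z}^2,$$
which is the desired identity (equivalently $A=\bar A$).

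I expect no conceptual obstacle, only two points of care. One must ensure the invariance that $Q$ depends on $t$ only is genuinely available, which it is, being the statement for $|\psi|/|h_z|$ in the previous theorem. And the cancellation of $h_z\bar h_{\bar z}$ is legitimate only away from the zeros of $h_z$, i.e. the branch points of the map $z\mapsto w$; since these are isolated and both sides are continuous, the identity extends across them. The remaining work is just the short chain-rule computation displayed above.
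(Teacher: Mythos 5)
Your proposal is correct and follows essentially the same route as the paper: both proofs start from the fact that $Q=|h_z|^2/(h+\bar h)$ depends on $t=w+\bar w$ only (guaranteed by the previous theorem via $(e^{-u}\phi)_t'-(e^{-u}\phi)^2\neq 0$), rewrite this as $h_zQ_z=\bar h_{\bar z}Q_{\bar z}$, and obtain the identity by the same explicit differentiation and cancellation. Your added remarks about the reality interpretation $A=\bar A$ and the harmless isolated zeros of $h_z$ are correct refinements but do not change the argument.
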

\begin{pf}Since $(e^{-u}\phi)_t^{'}-(e^{-u}\phi)^2\neq 0$, and
$Q=\frac{|h_z|^2}{(h+\bar{h})}$depends on $t$ only,
i.e.$$Q_w=Q_{\bar{w}}\quad \Leftrightarrow\quad
h_zQ_z=\bar{h}_{\bar{z}}Q_{\bar{z}}.$$
$$Q_z=\frac{h_{zz}\bar{h}_{\bar{z}}}{(h+\bar{h})}-|h_z|^2\frac{h_z}{(h+\bar{h})^2},
\quad
h_zQ_z=\frac{h_{zz}|h_z|^2}{(h+\bar{h})}-\frac{|h_z|^2(h_z)^2}{(h+\bar{h})^2}$$
$$Q_{\bar{z}}=\frac{h_{z}\bar{h}_{\bar{z}\bar{z}}}{(h+\bar{h})}-|h_z|^2\frac{\bar{h}_{\bar{z}}}{(h+\bar{h})^2},
\quad
\bar{h}_{\bar{z}}Q_z=\frac{\bar{h}_{\bar{z}\bar{z}}|h_z|^2}{(h+\bar{h})}-\frac{|h_z|^2(\bar{h}_{\bar{z}})^2}{(h+\bar{h})^2}$$This
is equivalent to
$$\frac{\bar{h}_{\bar{z}\bar{z}} }{(h+\bar{h})}-\frac{ (\bar{h}_{\bar{z}})^2}{(h+\bar{h})^2}
=\frac{h_{zz} }{(h+\bar{h})}-\frac{ (h_z)^2}{(h+\bar{h})^2}$$then we
finish the proof.\end{pf}

\begin{theorem}
Let $\Sigma$ be a compact oriented Lagrangian surface in
$\tilde{M}^2(4c)$. If its Maslov form is not conformal, then there
exist at most two noncongruent isometric immersions of $\Sigma$ in
$\tilde{M}^2(4c)$ with the mean curvature form $\Phi$.
\end{theorem}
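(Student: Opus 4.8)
The plan is to argue by contradiction and show that three distinct solutions force the Maslov form to be conformal. So suppose $f_1,f_2,f_3:\Sigma\to\tilde M^2(4c)$ are three noncongruent isometric Lagrangian immersions with the common induced metric $g=2e^u\,dz\,d\bar z$ and common mean curvature form $\Phi=\phi\,dz$, with Hopf functions $\psi_1,\psi_2,\psi_3$. By Proposition \ref{prop:difference} each difference $\Psi_{ij}=\Psi_i-\Psi_j$ is a holomorphic cubic differential and $|\psi_1|=|\psi_2|=|\psi_3|=:\rho$; since by the uniqueness part of the Bonnet-type theorem the data $(g,\Phi,\Psi)$ determine the immersion up to isometry, noncongruence gives $\Psi_{ij}\not\equiv0$, so the $\Psi_{ij}$ have only isolated zeros and the umbilic set $\mathcal U$ is finite. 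Moreover, by Proposition \ref{prop:psihol} the hypothesis that the Maslov form is \emph{not} conformal is exactly $(e^{-u}\phi)_z\not\equiv0$; since the common Codazzi equation \eqref{eq:integ3_csf} gives $\psi_{k,\bar z}=e^{2u}(e^{-u}\phi)_z$ independently of $k$, the common value $A:=\psi_{k,\bar z}$ is not identically zero, i.e.\ no $\psi_k$ is holomorphic.

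The first step is a pointwise computation on $\Sigma\setminus\mathcal U$ using that $\psi_1,\psi_2,\psi_3$ lie on the circle of radius $\rho$. Writing $\psi_{ij}=\psi_i-\psi_j$ and substituting $\bar\psi_k=\rho^2/\psi_k$ yields $\overline{\psi_{ij}}=-\rho^2\psi_{ij}/(\psi_i\psi_j)$, hence
\[
\psi_i\psi_j=-\rho^2\,\frac{\psi_{ij}}{\overline{\psi_{ij}}}.
\]
Applying $\partial_z\log$ to these three identities (the anti-holomorphic factors $\overline{\psi_{ij}}$ drop out since $\partial_z\overline{\psi_{ij}}=0$) and subtracting in pairs, I would find that each quotient $\psi_i\psi_{jk}/(\psi_j\psi_{ik})$ is anti-holomorphic. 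Because $|\psi_i/\psi_j|\equiv1$, multiplying such a quotient by $\overline{\psi_{ik}}/\overline{\psi_{jk}}$ gives an anti-holomorphic function of constant modulus $1$; it is bounded near the finite set $\mathcal U$ and near the zeros of the differences, so its singularities are removable and it is a unimodular constant. This produces a global identity
\[
\frac{\psi_i}{\psi_j}=\kappa_{ij}\,\frac{R}{\bar R},\qquad R=\frac{\psi_{ik}}{\psi_{jk}},\qquad |\kappa_{ij}|=1,
\]
where $R$ is meromorphic.

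The decisive and most delicate step is the global winding argument. From the identity above, $\arg(\psi_i/\psi_j)=\arg\kappa_{ij}+2\arg R$, so the order of $R$ at any point $P$ equals $\tfrac12(n_i-n_j)$, where $n_k=\frac1{2\pi}\oint_P d\alpha_k\in\mathbb Z$ is the winding of $\psi_k=\rho\,e^{i\alpha_k}$; here $d\alpha_k$ is a closed $1$-form with integer periods because $\alpha_k$ is harmonic by the Corollary to Lemma \ref{lem:laplace}. Away from $\mathcal U$ every $\psi_k$ is nonzero, so $n_i=n_j=0$ and therefore $\mathrm{ord}_P\psi_{ik}=\mathrm{ord}_P\psi_{jk}$ for all triples; thus the three holomorphic differences have the same divisor on $\Sigma\setminus\mathcal U$. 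The hard part will be to push this equality of divisors across the finitely many umbilic points, where all $\psi_k$ vanish simultaneously and the windings $n_k$ need not vanish: one must analyze the local orders of the $\psi_{ij}$ and the integers $n_k$ there to rule out residual zeros or poles of the ratios $\psi_{12}/\psi_{13}$, so that these ratios are holomorphic and nowhere zero on all of the compact surface $\Sigma$, hence nonzero constants. Once that is done, $\psi_i/\psi_j$ is a unimodular constant $\mu$, and then $\psi_{ij}=(\,\mu-1\,)\psi_j$ holomorphic with $\mu\neq1$ forces $\psi_j$ itself holomorphic, i.e.\ $A\equiv0$ — contradicting the non-conformality of the Maslov form. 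Hence at most two noncongruent immersions can occur. The technical crux throughout is the umbilic/coincidence-point bookkeeping of winding numbers together with the removable-singularity extensions; the rest is linear algebra on the circle and the argument principle.
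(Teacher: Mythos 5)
Your setup (three noncongruent immersions, holomorphic differences $\Psi_{ij}$ with $|\psi_1|=|\psi_2|=|\psi_3|$, and the reformulation of non-conformality as $(e^{-u}\phi)_z\not\equiv 0$ via Proposition \ref{prop:psihol}) matches the paper, and your algebraic identities are correct: $\overline{\psi_{ij}}=-\rho^2\psi_{ij}/(\psi_i\psi_j)$ does imply, after applying $\partial_z\log$ and subtracting in pairs, that $\psi_i\psi_{jk}/(\psi_j\psi_{ik})$ is anti-holomorphic, and the unimodular-constant conclusion for $G\cdot\overline{\psi_{ik}}/\overline{\psi_{jk}}$ is sound. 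But the proof does not close: the decisive step — showing that $R=\psi_{ik}/\psi_{jk}$ has no zeros or poles at the umbilic points, so that $\psi_i/\psi_j$ is a unimodular constant — is exactly the part you defer (``the hard part will be\dots''), and nothing you have established supplies it. Your winding argument gives $\mathrm{ord}_P R=\tfrac12(n_i-n_j)=0$ only at points where all $\psi_k$ are nonvanishing; at a point of $\mathcal U$ the windings $n_k$ need not vanish, and the only global constraint available, $\sum_{P}\mathrm{ord}_P R=0$, does not exclude a zero at one umbilic point compensated by a pole at another. Equality of the divisors of $\psi_{12},\psi_{13},\psi_{23}$ off a finite set is genuinely weaker than equality everywhere, so the argument as written does not yield that the ratios are constant, and hence does not reach the contradiction $A\equiv 0$.

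It is worth contrasting this with how the paper gets past the same obstruction. The paper works with $Q=\psi_{12}/\psi_1=1-e^{i\theta}$ rather than with ratios of the differences. The point of that choice is that $Q$ lies on the circle $|Q-1|=1$, so a continuous branch of $\arg Q$ takes values in $(-\pi/2,\pi/2)$: one gets a \emph{bounded} harmonic function on $\Sigma\setminus\mathcal U$ (harmonicity of $\arg Q$ coming from holomorphy of $\psi_{12}$ together with $\alpha_{z\bar z}=0$ from the corollary to Lemma \ref{lem:laplace}), which extends across the finite set $\mathcal U$ by the removable singularity theorem and is then constant by compactness; constancy of $\arg Q$ plus $|Q-1|\equiv 1$ forces $Q$ constant and $\psi_1$ holomorphic. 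In other words, the paper replaces your divisor bookkeeping at $\mathcal U$ by an a priori bound that makes the singularities removable — and your quantities ($\log R$, the winding numbers) admit no such bound near $\mathcal U$, which is precisely where you get stuck. If you want to salvage your route, you should either import that boundedness trick (e.g.\ apply your unimodularity identity to $1-\psi_j/\psi_i=\psi_{ij}/\psi_i$ instead of to $R$) or find an independent local analysis at umbilic points; as it stands the proposal is an interesting but incomplete alternative.
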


\begin{pf}From now on we assume that $f_1, f_2$ and $f_3$ are mutually
noncongruent. We will only use the fact for the two Lagrangian
immersions $f_1$ and $f_2$. Considering $|\psi_1|^2=|\psi_2|^2$, we
may write
$$\psi_2=\psi_1 e^{i\theta},$$
where $\theta$ is well defined outside the zeros of
$|\psi_k|^2=e^{3u}(e^{-u}|\phi|^2+c-K)=e^{3u}(\frac{1}{2}|H|^2+c-K)$
modulo $2\pi$, where $K$ is the Gauss curvature of the induced
metric and $|H|$ is the length of the mean curvature vector.

We now consider
$$Q:=\frac{\psi_1-\psi_2}{\psi_1}=1-e^{i\theta},$$
which is well defined on $\Sigma\backslash {\mathcal U}$.
It follows from Lemma \ref{lem:laplace} and $\psi_{1}-\psi_2$ is holomorphic that
\begin{equation}\label{eq:sign}
\triangle \log Q=-\triangle \log \psi_1\leq 0,
\end{equation}
where $\triangle$ is the Laplacian operator on $\Sigma$.
From \eqref{eq:sign} and $\triangle \log Q=\triangle \log |Q|+i\triangle \arg Q,$ we know that
$$\triangle \log |Q|\leq 0, \quad \triangle \arg Q=0$$
on $\Sigma\backslash {\mathcal U}$.

We now observe that since $Q$ is not zero in the connected set
$\Sigma\backslash {\mathcal U}$, the function $\theta$ cannot be
zero modulo $2\pi$ in this set. Hence we can choose a continuous
branch $\theta:\Sigma\backslash {\mathcal U}\rightarrow (0, 2\pi)$.
Then there exists a continuous branch
$$\arg (Q(z))\in (-\frac{\pi}{2}, \frac{\pi}{2}),$$
for $z\in \Sigma \backslash {\mathcal U}$. In particular, $\arg Q$
is a bounded harmonic function on $\Sigma\backslash {\mathcal U}$,
where $\mathcal U$ is a discret points set. Therefore, by removable
singularities theorem, $\arg Q$ can extend to a smooth harmonic
function on $\Sigma$. $\Sigma$ is compact and connected, hence $\arg
Q$ is a constant. Moreover, $|Q-1|\equiv 1$, which follows that $Q$
is a constant. Consequently, $\Psi_1$ is holomorphic, then by
Proposition \ref{prop:psihol}, its Maslov form is conformal. This
completes the proof.\end{pf}

\section{Lagrangian Bonnet pairs}

Let $f_1, f_2$ be a Lagrangian Bonnet pair, i.e., two isometric noncongruent
Lagrangian surfaces with coinciding mean curvature form.  As
conformal immersions of the same Riemann surface
$$f_1: \Sigma \rightarrow \tilde{M}(4c), \quad f_2: \Sigma \rightarrow \tilde{M}(4c),$$
they are described by the corresponding cubic Hopf differentials $\Psi_1, \Psi_2$,
the conformal metric $2e^udzd\bar z$ and the mean curvature form $\Phi$.
Since the surfaces are non-congruent the cubic Hopf differentials differ $\Psi_1\neq\Psi_2$.

\begin{proposition}\label{prop:alpha}
Let $\Psi_1$ and $\Psi_2$ be the cubic Hopf differentials of a Lagrangian
Bonnet pair $f_{1,2}:\Sigma \rightarrow \tilde{M}(4c)$. Then there
exist a holomorphic cubic differential $h=\Psi_1-\Psi_2$ on $\Sigma$
and a smooth real valued function $\alpha:\Sigma\rightarrow \mathbb{R}$ such that
$$\Psi_1=\frac{1}{2} h(i\alpha+1), \quad \Psi_2=\frac{1}{2}h(i\alpha-1).$$
\end{proposition}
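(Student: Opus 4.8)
The plan is to reduce the statement to a single real-valued function and then verify its reality and its smoothness separately. The first assertion, that $h=\Psi_1-\Psi_2$ is a holomorphic cubic differential, is immediate from the first part of Proposition \ref{prop:difference}. Writing $\Psi_k=\psi_k\,dz^3$ in a local complex coordinate, the two desired identities $\Psi_1=\tfrac12 h(i\alpha+1)$ and $\Psi_2=\tfrac12 h(i\alpha-1)$ are equivalent to the pair $\Psi_1-\Psi_2=h$ (which holds by definition of $h$) together with $\Psi_1+\Psi_2=i\alpha h$. Hence the whole proposition comes down to showing that the function defined by $\alpha=-i(\psi_1+\psi_2)/h$ wherever $h\neq0$ is real-valued and extends to a smooth function on all of $\Sigma$; substituting this $\alpha$ back reproduces $\tfrac12 h(i\alpha\pm1)=\psi_{1,2}$ by a one-line cancellation, so nothing else needs checking once reality and smoothness are in hand.

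First I would check reality. The identity $\alpha=\bar\alpha$ is equivalent, after clearing denominators, to $(\psi_1+\psi_2)(\bar\psi_1-\bar\psi_2)+(\bar\psi_1+\bar\psi_2)(\psi_1-\psi_2)=0$, whose left-hand side expands to $2(|\psi_1|^2-|\psi_2|^2)$. This vanishes precisely because $|\Psi_1|=|\Psi_2|$, i.e. the second statement of Proposition \ref{prop:difference}. Equivalently, away from the umbilic set one may write $\psi_2=\psi_1 e^{i\theta}$ with $\theta$ real, and a short computation gives $\alpha=\cot(\theta/2)$, which is manifestly real.

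The main obstacle is the global smoothness of $\alpha$, since it is a priori only defined where the holomorphic $h$ is nonzero. Here I would argue in two steps. First, I claim the zero set of $h$ is exactly the discrete umbilic set $\mathcal{U}=\{\psi_k=0\}$: from $|\psi_1|=|\psi_2|$ one gets $\mathrm{Re}(h/\psi_1)=1-\cos\theta\ge0$ wherever $\psi_1\neq0$, so $Q:=h/\psi_1$ takes values in the closed right half-plane, whereas an isolated zero of the holomorphic $h$ at a point with $\psi_1\neq0$ would force $Q$ to wind once around the origin and hence assume values of negative real part, a contradiction. Thus $\{h=0\}=\mathcal{U}$ is discrete, and on $\Sigma\setminus\mathcal{U}$ we have $\mathrm{Re}\,Q>0$, so $\arg Q$ admits a single-valued branch with values in $(-\pi/2,\pi/2)$. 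Second, exactly as in the proof of the preceding theorem (via Lemma \ref{lem:laplace}), $\arg Q$ is harmonic on $\Sigma\setminus\mathcal{U}$; being bounded it extends, by the removable-singularity theorem, to a harmonic function on $\Sigma$, and the strong maximum principle forbids it from attaining $\pm\pi/2$, for that would make it constant and force $h\equiv0$, contradicting $\Psi_1\neq\Psi_2$. Since $\arg Q=\theta/2-\pi/2$, one has $\alpha=-\tan(\arg Q)$, a smooth function of a smooth function whose values stay away from the poles of $\tan$; therefore $\alpha$ is smooth on all of $\Sigma$. The delicate point is thus entirely the passage across the umbilic set $\mathcal{U}$ together with the exclusion of spurious zeros of $h$; the reality identity and the two defining formulas are then immediate.
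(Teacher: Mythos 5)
Your setup is fine: reducing the proposition to the reality and global smoothness of $\alpha=-i(\Psi_1+\Psi_2)/h$ is exactly the paper's strategy, your reality computation correctly comes down to $|\Psi_1|=|\Psi_2|$ from Proposition \ref{prop:difference}, and your winding-number argument that $\{h=0\}=\mathcal{U}$ is a clean (independent) proof of the corollary the paper states afterwards. The problem is the smoothness step. You deduce that $\arg Q$ (equivalently $\arg\psi_1$, since $\arg h$ is harmonic away from zeros of $h$) is harmonic ``via Lemma \ref{lem:laplace}.'' But Lemma \ref{lem:laplace} is a statement about Lagrangian \emph{Bonnet surfaces}, i.e.\ surfaces admitting a one-parameter family of isometric deformations preserving $\Phi$; its proof uses the complete integrability of the Pfaff system $\tau=0$ and ``the arbitrariness of $t$.'' A Bonnet \emph{pair} provides only two immersions, hence only the two facts recorded in Proposition \ref{prop:difference}: $\Psi_1-\Psi_2$ is holomorphic and $|\Psi_1|=|\Psi_2|$. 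There is no PDE forcing $\mathrm{Im}(\log\psi_1)_{z\bar z}=0$ for a member of a mere pair (that condition is precisely the ``Lagrangian isothermic'' condition, which the paper treats as an extra hypothesis characterizing Bonnet surfaces, not pairs). So the harmonicity of $\arg Q$, on which your removable-singularity and maximum-principle argument entirely rests, is unjustified, and with it the smoothness of $\alpha=-\tan(\arg Q)$ across $\mathcal{U}$.

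The paper avoids this by a purely local argument at each zero $z_0$ of $h$: writing $h=(z-z_0)^k h_0(z)\,dz^3$ with $h_0$ holomorphic and $h_0(z_0)\neq 0$, the real-valuedness of $\alpha=-i(z-z_0)^{-k}q(z)/h_0(z)$ on a punctured neighborhood forces the smooth differential $q=\Psi_1+\Psi_2$ to vanish to order $k$ at $z_0$, i.e.\ $q=(z-z_0)^k g_0(z)$ with $g_0$ smooth, whence $\alpha=-ig_0/h_0$ extends smoothly. To repair your proof you should replace the harmonicity argument by this local division argument (or some other argument using only reality and the holomorphy of $h$); as written, the smoothness of $\alpha$ at umbilic points is not established.
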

\begin{proof}
Define a smooth cubic differential
$$q=\Psi_1+\Psi_2.$$
\eqref{eq:length} implies
$$q\bar h + h\bar q =0.$$
Thus $\alpha=-i\frac{q}{h}$ is a real valued function defined on
$\Sigma\backslash {\mathcal U}_h$ where
$${\mathcal U}_h=\{P\in \Sigma |h(P)=0\}$$
is the zero set of $h$. At any $z_0\in {\mathcal U}_h$ the
holomorphic differential $h$ has the form
$$h(z)=(z-z_0)^k h_0(z)dz, \quad h_0(z_0)\neq 0, \quad k\in {\mathbb N}.$$
In a neighborhood of $z_0$ we have
$$\alpha=-\frac{i}{(z-z_0)^k} \frac{q(z)}{h_0(z)}$$
where $q$ is smooth and $h_0$ is holomorphic. Real-valuedness of $\alpha$ near $z_0$ implies
$$q(z)=(z-z_0)^kg_0(z)$$
with $g_0$ smooth, which implies the smoothness of $\alpha$ at
$z_0$. So $\alpha$ can be smoothly extended to the whole $\Sigma$.
\end{proof}

\begin{corollary}
Umbilic points of a Lagrangian Bonnet pair are isolated. The umbilic set coincides with the zero set of $h$, i.e.,
${\mathcal U}={\mathcal U}_h$.
\end{corollary}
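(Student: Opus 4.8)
The plan is to read off both assertions directly from the representation of $\Psi_1,\Psi_2$ supplied by Proposition \ref{prop:alpha}, the decisive feature being that $\alpha$ is real-valued. First I would record that $h=\Psi_1-\Psi_2\not\equiv 0$: the Bonnet pair is non-congruent by hypothesis, so $\Psi_1\neq\Psi_2$ and $h$ is a nonzero holomorphic cubic differential. In any local conformal coordinate $z$ it then has the form $h=h(z)\,dz^3$ with $h(z)$ holomorphic and not identically zero on the (connected) surface $\Sigma$, so by the identity theorem its zeros are isolated. Hence the zero set ${\mathcal U}_h$ is a discrete subset of $\Sigma$; once I identify it with ${\mathcal U}$, the isolation statement follows at once.

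For the inclusion ${\mathcal U}_h\subseteq{\mathcal U}$ I would simply evaluate the formulas $\Psi_1=\tfrac12 h(i\alpha+1)$ and $\Psi_2=\tfrac12 h(i\alpha-1)$ at a point $P$ with $h(P)=0$: both right-hand sides vanish, whence $\Psi_1(P)=\Psi_2(P)=0$ and $P\in{\mathcal U}$.

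The reverse inclusion ${\mathcal U}\subseteq{\mathcal U}_h$ is where the reality of $\alpha$ does the work. At $P\in{\mathcal U}$ we have $\Psi_1(P)=0$, so $h(P)\bigl(i\alpha(P)+1\bigr)=0$. Since $\alpha$ is real valued, the scalar factor obeys $\lvert i\alpha(P)+1\rvert^2=1+\alpha(P)^2\geq 1>0$, so $i\alpha(P)+1\neq 0$ and therefore $h(P)=0$, i.e.\ $P\in{\mathcal U}_h$. (Running the same estimate on $\Psi_2$ gives the identical conclusion, as $\lvert i\alpha-1\rvert^2=1+\alpha^2\geq 1$ too.) Combining the two inclusions yields ${\mathcal U}={\mathcal U}_h$, and since ${\mathcal U}_h$ is isolated, so is ${\mathcal U}$.

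There is essentially no hard step here; the only point demanding care is to use the \emph{global} smooth real-valuedness of $\alpha$ established in Proposition \ref{prop:alpha}, rather than just its definition on $\Sigma\backslash{\mathcal U}_h$. This guarantees that the factor $i\alpha+1$ is nonzero at every point of $\Sigma$ — in particular at the candidate umbilic points themselves — which is exactly what lets one transfer vanishing of $\Psi_1$ to vanishing of $h$.
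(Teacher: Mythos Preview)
Your argument is correct and is exactly the intended one: the paper states the corollary without a separate proof, as an immediate consequence of Proposition~\ref{prop:alpha}, and you have spelled out precisely that implication (the reality of $\alpha$ forces $i\alpha\pm 1\neq 0$, so the zero sets match, and holomorphicity of $h\not\equiv 0$ gives isolation).
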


The number $k$ which is defined above is called the {\it index} of the umbilic point. We call the zero divisor $D=(h)$ of $h$ the
{\it Umbilic divisor} of a Lagrangian Bonnet pair.

In exactly the same way as in the case of Bonnet pairs in $\mathbb
R^3$ and $\mathbb CP^2$ (see \cite{Bobenko}, \cite{hm}), for
compact Riemann surfaces, Propositions \ref{prop:difference},
\ref{prop:alpha} imply the following
\begin{proposition}
\begin{itemize}
\item[(1)] There are no Lagrangian Bonnet pairs of genus zero.
\item[(2)] Lagrangian Bonnet pairs of genus one have no umbilic points.
\item[(3)] If Lagrangian Bonnet pairs of genus $g\geq 1$ exist, the umbilic divisor $D$ is of degree $6g-6$ and its class is $D=3K$,
where $K$ is the canonical divisor.
\end{itemize}
\end{proposition}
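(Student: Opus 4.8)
The plan is to read $h=\Psi_1-\Psi_2$ not as an auxiliary object but as the fundamental invariant of the pair: by Proposition~\ref{prop:difference} it is a holomorphic cubic differential on the compact Riemann surface $\Sigma$, that is, a holomorphic section of the line bundle $K^{3}$, where $K$ is the canonical bundle. Since the pair is noncongruent we have $\Psi_1\neq\Psi_2$, so $h\not\equiv 0$. From here everything is governed by two standard facts of compact Riemann surface theory: a nonzero holomorphic section of a holomorphic line bundle $L$ has zero divisor of degree exactly $\deg L$, and a line bundle of negative degree admits no nonzero holomorphic section. The relevant degree is $\deg K^{3}=3\deg K=3(2g-2)=6g-6$.

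For part (1) I would observe that in genus zero $\deg K^{3}=-6<0$, so $K^{3}$ carries no nonzero holomorphic section; this forces $h\equiv 0$ and hence $\Psi_1=\Psi_2$, contradicting noncongruence, so no genus-zero Lagrangian Bonnet pair can exist. For part (2), in genus one $\deg K^{3}=0$; a nonzero holomorphic section of a degree-zero line bundle has an effective zero divisor of degree $0$, hence no zeros at all (equivalently, on a torus $K$ is trivial and $h=c\,dz^{3}$ with $c\neq 0$ is nowhere vanishing). By the Corollary following Proposition~\ref{prop:alpha} the umbilic set ${\mathcal U}$ coincides with the zero set ${\mathcal U}_h$ of $h$, so a genus-one pair has no umbilic points.

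For part (3), assuming a pair of genus $g\geq 1$ exists, $h$ is a nonzero section of $K^{3}$, so its zero divisor---which is precisely the umbilic divisor $D=(h)$ by the same Corollary---is effective of degree $\deg K^{3}=6g-6$; moreover, being the divisor of a section of $K^{3}$, it is linearly equivalent to $3K$, giving the divisor class $D=3K$. The only points requiring care are already dispatched by the earlier results: that $h$ is \emph{globally} holomorphic (Proposition~\ref{prop:difference}) and that it does not vanish identically (noncongruence). I expect the one genuinely delicate step to be part (2): it is not enough that $h$ have few zeros---one must invoke the nonexistence of zeros of a nonzero section in the critical degree-zero case, which is exactly where the torus geometry enters. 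With these ingredients in hand the proposition follows as a direct degree count, exactly parallel to the $\mathbb{R}^{3}$ and $\mathbb{C}P^{2}$ cases of \cite{Bobenko, hm}.
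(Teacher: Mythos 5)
Your argument is correct and is exactly the route the paper intends: the paper gives no written proof, merely asserting that Propositions~\ref{prop:difference} and \ref{prop:alpha} imply the statement as in the $\mathbb R^3$ and $\mathbb CP^2$ cases, and your degree count for the nonzero holomorphic section $h$ of $K^3$ (degree $6g-6$, negative for $g=0$, zero for $g=1$, class $3K$ in general), combined with the Corollary identifying ${\mathcal U}$ with the zero set of $h$, supplies precisely the missing details.
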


\begin{acknow}
The first author was supported by NSFC grant No. 10701007 and No. 11071018.
The second author was supported by NSFC grant No.
No. 11271213.
And the third author was supported by the NSFC (Grant
Nos. 10941002, 11001262) and the Starting Fund for Distinguished Young
Scholars of Wuhan Institute of Physics and Mathematics (Grant No.
O9S6031001). The third author would like to thank the Hong Kong University of Science \& Technology for the  support during the project. 
\end{acknow}
\bibliographystyle{amsplain}

\end{document}